\newtheorem{thm}{Theorem}
\newtheorem{lem}[thm]{Lemma}
\newtheorem{prop}[thm]{Proposition}
\newtheorem{set}[thm]{Setting}
\newcommand{\vip}{\vskip.2cm}
\newcommand{\e}{{\varepsilon}}
\newcommand{\rr}{{\mathbb{R}}}
\newcommand{\rd}{{\rr^d}}
\newcommand{\nn}{{\mathbb{N}}}
\newcommand{\E}{\mathbb{E}}
\newcommand{\Var}{\mathbb{V}{\rm ar}\,}
\newcommand{\cH}{{\mathcal H}}
\newcommand{\cN}{{\mathcal N}}
\newcommand{\cM}{{\mathcal M}}
\newcommand{\cW}{{\mathcal W}}
\newcommand{\cB}{{\mathcal B}}
\newcommand{\cT}{{\mathcal T}}
\newcommand{\cS}{{\mathcal S}}
\newcommand{\cP}{{\mathcal P}}
\newcommand{\cQ}{{\mathcal Q}}
\newcommand{\cR}{{\mathcal R}}
\newcommand{\intrd}{{\int_{\rd}}}
\newcommand{\dd}{{\rm d}}
\newcommand{\mm}{m}
\newcommand{\bla}{\color{black}}
\newcommand{\blu}{\color{black}}
\begin{document}

\title[Convergence of the empirical measure]
{Convergence of the empirical measure in expected Wasserstein distance: non asymptotic explicit bounds in $\rr^d$}

\author{Nicolas Fournier}

\address{N. Fournier, Sorbonne Universit\'e, CNRS, Laboratoire de Probabilit\'es, Statistiques 
et Mod\'elisation, F-75005 Paris, France. E-mail: {\tt nicolas.fournier@sorbonne-universite.fr}}

\begin{abstract}
We provide some
non asymptotic bounds, with explicit constants, 
that measure the rate of convergence, in expected 
Wasserstein distance, of the empirical measure associated to an i.i.d. $N$-sample of a given
probability distribution on $\mathbb{R}^d$.
\end{abstract}

\subjclass[2010]{60F25, 65C05}

\keywords{Empirical measure, Sequence of i.i.d. random variables, Optimal transportation}

\thanks{I thank J.L. Verger-Gaugry and T. Le Gouic for fruitful discussions and
the anonymous referee for his comments}
\maketitle

\section{Introduction}
Let $d\geq 1$. 
We consider $\mu \in \cP(\rd)$, the set of probability measures on $\rd$,
and an i.i.d. sequence $(X_k)_{k\geq 1}$
of $\mu$-distributed random variables. For $N\geq 1$, we introduce the empirical measure
\begin{equation}\label{muN}
\mu_N=\frac1N
\sum_{k=1}^N \delta_{X_k}.
\end{equation}

Estimating the rate of convergence of $\mu_N$ to $\mu$ is of course a fundamental problem,
and it seems that measuring this convergence in Wasserstein distance
is nowadays a widely adopted choice. 
Some seminal works on the subject are those by Dudley \cite{d},
Ajtai-Koml\'os-Tusn\'ady \cite{akt} and Dobri\'c-Yukich \cite{dy}. More recently, some results
have been established by Bolley-Guillin-Villani \cite{bgv}, Boissard-Le Gouic \cite{blg},
\blu Le Gouic \cite{lg}, \bla
Dereich-Scheutzow-Schottstedt \cite{dss} and Fournier-Guillin \cite{fg}.
In particular, we can find in \blu \cite{fg} \bla the following result.

\vip

Fix some norm $|\cdot|$ on $\rr^d$ and consider, for $p>0$ and $\mu,\nu \in \cP(\rd)$, the transport cost
$$
\cT_p(\mu,\nu) = \inf\Big\{\int_{\rd\times\rd} |x-y|^p \xi(\dd x,\dd y) : \xi\in\cH(\mu,\nu)\Big\},
$$ 
where
$\cH(\mu,\nu)$ stands for the set of probability measures on $\rd\times\rd$ with marginals
$\mu$ and $\nu$. It holds that $\cT_p=\cW_p^p$, with the usual notation, if $p\geq 1$.
For $q>0$, we also define
$$
\cM_q(\mu)= \intrd |x|^q \mu(\dd x).
$$
There exists a constant $C_{d,p,q}$ such that for all $\mu \in \cP(\rr^d)$, for all $N\geq 1$,
with $\mu_N$ defined in \eqref{muN},
$$
\E[\cT_p(\mu_N,\mu)] \leq C_{d,p,q} [\cM_q(\mu)]^{p/q}\times \left\{ \begin{array}{cl}
N^{-1/2} & \hbox{if $p>d/2$ and $q>2p$},\\[3pt]
N^{-1/2} \log (1+N) & \hbox{if $p=d/2$ and $q>2p$},\\[3pt]
N^{-p/d} & \hbox{if $p\in (0,d/2)$ and $q>dp/(d-p)$}.
\end{array}\right.
$$
\blu This bound is sharp, as well as the number of required moments, 
but the constant $C_{d,p,q}$ is not explicit.
Some explicit constants are provided in \cite{dss} when $p>d/2$, but they are quite large.
One can also get some explicit bounds using \cite{lg}, see a few lines below.
\bla

\vip

\blu It seems that applied scientists really need some explicit values for the constant
$C_{d,p,q}$. \bla If following the proofs in \cite{dss,fg}, one finds 
some \blu rather large \bla constants.
But revisiting these proofs and optimizing as often as possible the computations,
which is the purpose of the present paper,
we obtain some rather reasonable constants, when using the maximum norm $|\cdot|_\infty$ on $\rd$. 
In particular, they remain finite
as the dimension tends to infinity (but of course, the rate of convergence in $N^{-p/d}$ is worse and worse).
\vip
The reason why the maximum norm $|\cdot|_\infty$ is used in \cite{dss,fg} it that the proofs rely
on a partitioning of the unit ball, and that a cube is very easy to cut into 
smaller cubes. We of course deduce some bounds for the more natural Euclidean norm $|\cdot|_2$,
multiplying the constant by $d^{p/2}$. This leads to a constant (for the Euclidean norm) 
that explodes as $d\to \infty$.
\vip
Using similar arguments, together with some ideas found in \blu Boissard-Le Gouic \cite{blg} and \bla 
Weed-Bach \cite{wb}, Lei \cite[Theorem 3.1]{l}
proves that one can also find, in the case of the Euclidean norm, 
some constants (that he does not make explicit) that remain finite as $d\to\infty$.
We also produce, in the present paper, some explicit constants in this context.
\vip

\blu Finally, let us mention that Dudley \cite{d} and more recently 
Boissard-Le Gouic \cite{blg}, Le Gouic \cite{lg} and
Weed-Bach \cite{wb} study the very interesting problem
of obtaining some rates of convergence depending on the {\it true} dimension of the problem:
if e.g. $\mu$ is a measure on $\rd$ but is actually carried by a manifold of lower dimension, what about
the rate of convergence ?
They introduce some notion of dimension of the measure $\mu$ and get some bounds
of $\E[\cT_p(\mu_N,\mu)]$ in terms of this dimension.
Let us mention the following formula, that can be found in the work of Le Gouic \cite[Theorem 3.2]{lg}
(after correcting two small mistakes, with the agreement of the author):
for any metric space $(E,d)$ with finite diameter $D$, for $\mu$ a probability measure on $E$ and 
for $\mu_N$ the associated empirical measure, it holds that for all $N\geq 1$, all $p\geq 1$, all $k\geq 0$,
$$
\E[\cT_p(\mu_N,\mu)]\leq 2^{2p-1}\Big[D^p 2^{1-kp}+ \frac{2^p}{\sqrt N}\int_{D^p2^{-(k+1)p}}^{D^p2^{-p}}
\sqrt{\cN(E,u^{1/p})-1} \; \dd u\Big],
$$
where $\cN(E,\e)$ is the minimal number of balls of radius $\e$ required to cover $E$.
This is a very deep and elegant formula. Let us mention that when applied to the case where
$E=B(0,1)$ in $\rr^d$, after optimizing in $k$, 
this produces some good bounds, with some constants that are a little greater than what 
we will find below. Le Gouic \cite{lg} also studies the non compact case $E=\rr^d$, without really tracking
the constants.

\bla
 
\vip
We refer to the introductions \blu of \cite{blg, dss,fg,lg,wb} \bla for some much more detailed presentations
of the subject and its numerous applications. \blu Let us also mention the closely related topic of 
{\it optimal matching}, see Barthe-Bordenave \cite{bb} and the references therein. \bla

\vip

\blu
Let us emphasize that the present paper contains no new idea: all the deep arguments 
have been previously introduced in the above mentioned papers. 
We only try to handle some slightly more precise computations.

\bla

\section{Main results}
\subsection{Basic notation}
\blu
For $\mm\in [1,\infty)$ and $x=(x_1,\dots,x_d)\in\rr^d$, we set
$$
|x|_\mm=\Big(\sum_{i=1}^d |x_i|^\mm \Big)^{1/\mm} 
\qquad \hbox{and} \qquad |x|_\infty=\max \{|x_1|,\dots,|x_d|\}.
$$
For $p>0$, for $\mu,\nu$ in $\cP(\rd)$ and for $\mm \in [1,\infty)\cup\{\infty\}$, we set
$$
\cT_p^{(\mm)}(\mu,\nu) = \inf \Big\{\int_{\rd\times\rd} |x-y|_\mm^p \xi(\dd x,\dd y) : \xi \in \cH(\mu,\nu)\Big\},
$$
where $\cH(\mu,\nu)$ is the set of probability measures on $\rd\times\rd$ with marginals
$\mu$ and $\nu$.
For $q>0$, for $\mu\in\cP(\rd)$ and for $\mm \in [1,\infty)\cup\{\infty\}$, we define
$$
\cM_q^{(\mm)}(\mu)= \intrd |x|_\mm^q \mu(\dd x).
$$
We of course have, since $|\cdot|_\infty \leq |\cdot|_\mm \leq d^{1/\mm} |\cdot|_\infty$,
\begin{equation}\label{neq}
\cT_p^{(\infty)}(\mu,\nu)\leq \cT_p^{(\mm)}(\mu,\nu)\leq d^{p/\mm} \cT_p^{(\infty)}(\mu,\nu)
\quad \hbox{and}\quad \cM_q^{(\infty)}(\mu)\leq \cM_q^{(\mm)}(\mu)\leq d^{q/\mm}\cM_q^{(\infty)}(\mu).
\end{equation}
\bla

\subsection{Covering number}
Our proofs are based on a suitable partitioning of the unit ball. The case
of the maximum norm is not hard, because it is easy to cut a cube into smaller cubes.
\blu The other cases are more intricate. For $\e\in (0,1]$ and $\mm \in [1,\infty)$, we define
\begin{equation}\label{Nr}
N_\e^{(\mm)}=\min\Big\{k\in \nn : \exists\; x_1,\dots,x_k \in B_\mm(0,1) \hbox{ such that } B_\mm(0,1)
\subset \cup_{i=1}^k B_\mm(x_i,\e)\Big\},
\end{equation}
where $B_\mm(x,\e)=\{y\in \rd : |x-y|_\mm<\e\}$, as well as
\begin{equation}\label{Kd}
K_d^{(\mm)}=\sup_{\e\in (0,1]}\e^d N_\e^{(\mm)}, \quad \hbox{so that for all } \e\in (0,1], \quad N_\e^{(\mm)} 
\leq K_d^{(\mm)} \,\e^{-d}.
\end{equation}
See \eqref{Kdc} and \eqref{KdVG} below for some estimates of these covering numbers.
\bla

\subsection{Main result}

For $x>0$ and $q>s>0$, we set
\begin{equation}\label{H}
H(x,s,q)=\Big(x\frac{q-s}{s}+(1+x)\Big(\frac{q}s\Big)^{q/(q-s)}\Big)^{s/q}\frac{q}{q-s}.
\end{equation}
\blu Observe that for each $x>0$, each $s>0$, $\lim_{q\to \infty} H(x,s,q)=1$.
The following formulas, that constitute the main results of the paper, are a little complicated,
but rather easy to calculate explicitly with a computer. \bla

\begin{thm}\label{mr}
We fix $p>0$ and $\mu \in \cP(\rd)$, we set 
$\e_p=\max\{2^{-1},2^{-p}\}$ and we recall \eqref{muN}. 

\vip
\blu
(i) If $p>d/2$ and $q>2p$, then  for $\mm \in [1,\infty)\cup\{\infty\}$, for all $N \geq 1$,
$$
\E[\cT_p^{(\mm)}(\mu_N,\mu)]\leq 2^p \frac{\kappa_{d,p}^{(\mm)}}{\sqrt N}[\cM_q^{(\mm)}(\mu)]^{p/q}
\theta_{d,p,q}^{(\mm)},
\quad \hbox{where} \quad
\theta_{d,p,q}^{(\mm)}=H\Big(\frac{\e_p}{\kappa_{d,p}^{(\mm)}},2p,q\Big),
$$
with
$$
\kappa_{d,p}^{(\infty)}=\frac{2^{d/2-1}}{1-2^{d/2-p}},
$$
and with, if $\mm \in [1,\infty)$,
$$
\kappa_{d,p}^{(\mm)}=\min\{\kappa_{d,p,r}^{(\mm)} : r  \geq 2\},\qquad \hbox{where} \qquad
\kappa_{d,p,r}^{(\mm)}=\sqrt{K_d^{(\mm)}}\frac{2^{p-1-d/2}r^{p+d/2}}{(r-1)^p(1-r^{d/2-p})}.
$$

(ii) If $p=d/2$ and $q>2p$, then for $\mm \in [1,\infty)\cup\{\infty\}$, for all $N \geq 1$,
$$
\E[\cT_p^{(\mm)}(\mu_N,\mu)]\leq 2^p\frac{\kappa_{d,p,N}^{(\mm)}}{\sqrt N}[\cM_q^{(\mm)}(\mu)]^{p/q}
\theta_{d,p,q,N}^{(\mm)}, \quad \hbox{where}\quad
\theta_{d,p,q,N}^{(\mm)}=H\Big(\frac{\e_p}{\kappa_{d,p,N}^{(\mm)}},2p,q\Big),
$$
with (here $\log_+(x)=\max\{\log x,0\}$)
$$
\kappa_{d,p,N}^{(\infty)}
=\frac{2^{p-1}}{p\log 2} \log_+\Big((2^{1-p}-2^{1-2p})\sqrt N \Big)+ \frac{2^{p-1}}{1-2^{-p}},
$$
and with, if $\mm \in [1,\infty)$,
$$\kappa_{d,p,N}^{(\mm)}=\min\{\kappa_{d,p,N,r}^{(\mm)} : r\geq 2\},$$ 
where
$$ 
\kappa_{d,p,N,r}^{(\mm)}=\frac{\sqrt{K_d^{(\mm)}}}2 \frac{r^{2p}}{(r-1)^p p\log r} 
\log_+\Big(2^{p+1}(r^{-p}-r^{-2p})\sqrt{\frac{N}{K_d^{(\mm)}}} \Big)
+ \frac{\sqrt{K_d^{(\mm)}}}2 \frac{r^{3p}}{(r-1)^p (r^p-1)}.
$$

(iii) If $p\in (0,d/2)$ and $q>dp/(d-p)$, then for $\mm \in [1,\infty)\cup\{\infty\}$, for all $N \geq 1$,
$$
\E[\cT_p^{(\mm)}(\mu_N,\mu)]\leq 2^p \frac{\kappa_{d,p}^{(\mm)}}{N^{p/d}} [\cM_q^{(\mm)}(\mu)]^{p/q}\theta_{d,p,q}^{(\mm)},
\quad \hbox{where}\quad
\theta_{d,p,q}^{(\mm)}=H\Big(\frac{2^{1-2p/d}\e_p}{\kappa_{d,p}^{(\mm)}},\frac{dp}{d-p},q \Big),
$$ 
with
$$
\kappa_{d,p}^{(\infty)}=\frac{2^{p-2p/d}(1-2^{-d/2})^{1-2p/d}}{1-2^{p-d/2}},
$$
and with, if $\mm \in [1,\infty)$,
$$
\kappa_{d,p}^{(\mm)}=\min\{\kappa_{d,p,r}^{(\mm)} : r \geq 2\},\qquad\hbox{where}\qquad 
\kappa_{d,p,r}^{(\mm)}=\Big(\frac{K_d^{(\mm)}}{4}\Big)^{p/d} \frac{r^{2p} (1-r^{-d/2})^{1-2p/d}}{(r-1)^p(1-r^{p-d/2})}.
$$
\bla
\end{thm}

\subsection{Comments}
By invariance by translation, we can replace \blu $\cM_q^{(\mm)}(\mu)$, \bla
in all the formulas, by
$$
\inf \Big\{\intrd \blu |x-x_0|_\mm^q \bla \mu(\dd x) : x_0 \in \rd \Big\}.
$$

Let us next observe, and we will see that this is often advantageous, that
concerning the bound of $\E[\cT_p^{(\mm)}(\mu_N,\mu)]$ \blu when $\mm \in [1,\infty)$, \bla
we can replace, by \eqref{neq},
\vip
$\bullet$ 
\blu $\kappa^{(\mm)}_{d,p}$ by $d^{p/\mm}\kappa_{d,p}^{(\infty)}$ and $\theta^{(\mm)}_{d,p,q}$ by $\theta^{(\infty)}_{d,p,q}$ \bla
in items (i) and (iii);
\vip
$\bullet$
\blu $\kappa^{(\mm)}_{d,p,N}$ by $d^{p/\mm}\kappa_{d,p,N}^{(\infty)}$ and $\theta^{(\mm)}_{d,p,q,N}$ \bla by 
$\theta^{(\infty)}_{d,p,q,N}$
in item (ii).

\vip

\blu In each case, we present the bound under the form
$$
\hbox{{\tt \Big(diameter of $B_\mm(0,1)\Big)^p$}$\!\!\times${\tt 
\Big(bound in the compact case\Big)}$\times [\cM_q^{(\mm)}(\mu)]^{p/q}\times${\tt
\Big($\theta_{d,p,q}^{(\mm)}$ or $\theta_{d,p,q,N}^{(\mm)}$\Big)},}
$$
where {\it diameter of $B_\mm(0,1)$} equals $2$ and where by {\it compact case} 
we mean the case where $\mu$ is supported by the ball $B_\mm(0,1/2)$ with diameter $1$. \bla
\vip
One can tediously check that in each case, \blu $q \mapsto \theta_{d,p,q}^{(\mm)}$
(or $q \mapsto \theta_{d,p,q,N}^{(\mm)}$) \bla is decreasing
and tends to $1$ as $q\to \infty$. Hence if we are in the compact case, i.e. if $\mu$ is supported in 
\blu $B_\mm(0,1/2)$, \bla then 
\blu $\cM_q^{(\mm)}(\mu)\leq 2^{-q}$ \bla for all $q>0$, and we find that 
$$
\blu \lim_{q\to \infty} \hbox{{\tt \Big(diameter of $B_\mm(0,1)\Big)^p$}$\times[\cM_q^{(\mm)}(\mu)]^{p/q}\times${\tt
\Big($\theta_{d,p,q}^{(\mm)}$ or $\theta_{d,p,q,N}^{(\mm)}$\Big)}}=1, \bla
$$
which justifies the denomination {\tt bound in the compact case}.

\vip

One can tediously check
from (iii) that \blu $p \mapsto (\kappa_{d,p}^{(\mm)})^{1/p}$ \bla is increasing for $p\in (0,d/2)$, which is natural
by monotony in $p$ of $[\cT_p(\mu,\nu)]^{1/p}$. Actually, \blu when $\mm \in [1,\infty)$, it holds that
$p \mapsto \bla (\kappa_{d,p,r}^{(\mm)})^{1/p}$ \bla is increasing for $p\in (0,d/2)$ for each $r\geq 2$.

\vip

However, in the non compact case, we did not manage to guarantee such a property: it does not hold true that
in item (iii), \blu $p \mapsto [\kappa_{d,p}^{(\mm)}\theta_{d,p,q}^{(\mm)}]^{1/p}$ \bla is increasing 
for $p \in (0,d/2)$, as it should. Hence it may be sometimes be preferable to use the bound:
for $p\in (0,d/2)$ and $q>dp/(d-p)$,
\begin{equation}\label{grosdefaut}
\blu \E[\cT_p^{(\mm)}(\mu_N,\mu)]\!\leq \!\inf_{p'\in [p,d/2)}\E[\cT_{p'}^{(\mm)}(\mu_N,\mu)]^{p/p'}\!\leq\!
\inf_{p'\in [p,d/2)}2^p \frac{[\kappa_{d,p'}^{(\mm)}]^{p/p'}}{N^{p/d}} [\cM_q^{(\mm)}(\mu)]^{p/q}[\theta_{d,p',q}^{(\mm)}]^{p/p'},
\bla
\end{equation}
with the convention that \blu $\theta_{d,p',q}^{(\mm)}=\infty$ \bla if $q\leq d p'/(d-p')$.
This is the major default of this work. We identified some computations that
might be done more carefully, but this led to awful complications, without producing some marked improvements.
\vip
Finally, one can easily check from (iii)
that for any $p>0$,
$$
\lim_{d\to\infty} \kappa_{d,p}^{(\infty)} = 2^p \qquad \hbox{ and } \qquad 
\blu \lim_{d\to\infty} \kappa_{d,p}^{(2)} = 4^p. \bla
$$ 
Concerning the Euclidean case, we use \eqref{KdVG} below which implies that $\lim_{d\to \infty} (K_d^{(2)})^{1/d}= 1$,
we find that \blu $\lim_{d\to\infty} \kappa_{d,p,r}^{(2)} = (r-1)^{-p}r^{2p}$ for each $r\geq 2$, and this (optimally)
equals $4^p$ with $r=2$. When $\mm \in [1,\infty)\setminus\{2\}$, we deduce from \eqref{Kdc} that
$$
\limsup_{d\to\infty} \kappa_{d,p}^{(\mm)} \leq 12^p.
$$
\bla

\subsection{Numerical values in the compact case}
We start with the maximum norm. 
\vip
\begin{center}
\noindent\fbox{\begin{minipage}{0.97\textwidth}
\small{
\begin{center}
\renewcommand{\arraystretch}{1.9}
\begin{tabular}{|c|c|c|c|c|c|c|c|c|c|c|}
\hline
$d=1$&$d=2$&$d=3$&$d=4$&$d=5$&$d=6$&$d=7$&$d=8$&$d=9$&$d=100$&$d=500$
\\\hline
\large{$\frac{2.42}{N^{1/2}}$}
&\large{$\frac{0.73\log N + 1}{N^{1/2}}$}
&\large{$\frac{3.72}{N^{1/3}}$}
&\large{$\frac{2.45}{N^{1/4}}$}
&\large{$\frac{2.09}{N^{1/5}}$}
&\large{$\frac{1.94}{N^{1/6}}$}
&\large{$\frac{1.87}{N^{1/7}}$}
&\large{$\frac{1.84}{N^{1/8}}$}
&\large{$\frac{1.82}{N^{1/9}}$}
&\large{$\frac{1.98}{N^{1/100}}$}
&\large{$\frac{2.00}{N^{1/500}}$}
\\\hline
\end{tabular}
\end{center}
{\sc Table 1.} Bound of $\E[\cT_1^{(\infty)}(\mu_N,\mu)]$ for $N\geq 1$ (actually $N\geq 4$ when $d=2$),
if $\mu\in \cP(B_\infty(0,1/2))$.}
\end{minipage}}
\end{center}
\vip
\begin{center}
\noindent\fbox{\begin{minipage}{0.97\textwidth}
\small{
\begin{center}
\renewcommand{\arraystretch}{1.9}
\begin{tabular}{|c|c|c|c|c|c|c|c|c|c|c|}
\hline
$d=1$&$d=2$&$d=3$&$d=4$&$d=5$&$d=6$&$d=7$&$d=8$&$d=9$&$d=100$&$d=500$
\\\hline
\large{$\frac{1.05}{N^{1/4}}$}
&\large{$\frac{1.42}{N^{1/4}}$}
&\large{$\frac{2.20}{N^{1/4}}$}
&\large{$\frac{\sqrt{0.73\log N+1.26}}{N^{1/4}}$}
&\large{$\frac{2.75}{N^{1/5}}$}
&\large{$\frac{2.20}{N^{1/6}}$}
&\large{$\frac{2.01}{N^{1/7}}$}
&\large{$\frac{1.92}{N^{1/8}}$}
&\large{$\frac{1.87}{N^{1/9}}$}
&\large{$\frac{1.98}{N^{1/100}}$}
&\large{$\frac{2.00}{N^{1/500}}$}
\\\hline
\end{tabular}
\end{center}
{\sc Table 2.} Bound of $\sqrt{\E[\cT_2^{(\infty)}(\mu_N,\mu)]}$ for $N\geq 1$ (actually $N\geq 8$ when $d=4$),
if $\mu\in \cP(B_\infty(0,1/2))$.}
\end{minipage}}
\end{center}
\vip
\blu
For all $\mm \in [1,\infty)$, we have the classical easy estimate
\begin{equation}\label{Kdc}
K_d^{(\mm)} \leq 3^d, \qquad d\geq 1.
\end{equation}
Concerning the Euclidean norm, when $d$ is large enough, some much better results are available.
By
Verger-Gaugry \cite[(1.1)-(1.3)-(1.4)]{vg}, where (1.1) is due to Rogers \cite{r}
(we know from the author that there is a typo in
\cite{vg} and $d\geq 8$ is the correct condition, instead of $d \geq 2$),
\begin{gather}\label{KdVG}
K_d^{(2)} \leq \max\{K_{d,1}^{(2)},K_{d,2}^{(2)},K_{d,3}^{(2)}\}, \blu \qquad d \geq 8, \bla
\end{gather}
where (recall \eqref{Nr}-\eqref{Kd};
we have  $N_r=\nu_{T,n}$ with $2T=1/r$ and $n=d$ in the notation of \cite{vg})
\begin{gather*}
K_{d,1}^{(2)}= d^2[\log d + \log \log d +5],\\
K_{d,2}^{(2)}=\frac{7^{4(\log 7)/7}}4\sqrt{\frac\pi 2}d^{3/2}\frac{2(d-1)\log d +\frac12\log d
+ \log(\frac{\pi\sqrt{2d}}{\sqrt{\pi d}-2})}
{(1-2/\log d)(1-2/\sqrt{\pi d})(\log d)^2},\\
K_{d,3}^{(2)}= \sqrt{2 \pi d} \frac{(d-1)\log(2d)+(d-1)\log\log d +\frac12\log d
+ \log(\frac{\pi\sqrt{2d}}{\sqrt{\pi d}-2})}
{(1-2/\log d)(1-2/\sqrt{\pi d})}. 
\end{gather*}
\vip 
\blu
\begin{center}
\noindent\fbox{\begin{minipage}{0.97\textwidth}
\small{
\begin{center}
\renewcommand{\arraystretch}{1.9}
\begin{tabular}{|c|c|c|c|c|c|c|c|c|c|c|c|}
\hline
$d=8$&$d=10$&$d=12$&$d=15$&$d=20$&$d=25$&$d=35$&$d=50$&$d=75$&$d=100$&$d=500$
\\\hline
\large{$\!\frac{12.47}{N^{1/8}}\!$}
&\large{$\!\frac{8.91}{N^{1/10}}\!$}
&\large{$\!\frac{7.67}{N^{1/12}}\!$}
&\large{\boldmath$\!\frac{6.74}{N^{1/15}}\!$}
&\large{\boldmath$\!\frac{6.00}{N^{1/20}}\!$}
&\large{\boldmath$\!\frac{5.60}{N^{1/25}}\!$}
&\large{\boldmath$\!\frac{5.17}{N^{1/35}}\!$}
&\large{\boldmath$\!\frac{4.85}{N^{1/50}}\!$}
&\large{\boldmath$\!\frac{4.60}{N^{1/75}}\!$}
&\large{\boldmath$\!\frac{4.47}{N^{1/100}}\!$}
&\large{\boldmath$\!\frac{4.12}{N^{1/500}}\!$}
\\\hline
\large{\boldmath$\!\frac{5.18}{N^{1/8}}\!$}
&\large{\boldmath$\!\frac{5.73}{N^{1/10}}\!$}
&\large{\boldmath$\!\frac{6.29}{N^{1/12}}\!$}
&\large{$\!\frac{7.11}{N^{1/15}}\!$}
&\large{$\!\frac{8.36}{N^{1/20}}\!$}
&\large{$\!\frac{9.47}{N^{1/25}}\!$}
&\large{$\!\frac{11.38}{N^{1/35}}\!$}
&\large{$\!\frac{13.76}{N^{1/50}}\!$}
&\large{$\!\frac{17.01}{N^{1/75}}\!$}
&\large{$\!\frac{19.73}{N^{1/100}}\!$}
&\large{$\!\frac{44.60}{N^{1/500}}\!$}
\\\hline
\end{tabular}
\end{center}
{\sc Table 3.} Bound of $\E[\cT_1^{(2)}(\mu_N,\mu)]$ for $N\geq 1$
if $\mu\in \cP(B_2(0,1/2))$, using the bound proposed for the Euclidean norm in 
Theorem \ref{mr}-(iii) (second line)
and using $\sqrt{d}$ times the bound proposed for the maximum norm (third line). In bold the one to be used.}
\end{minipage}}
\end{center}
\vip
\begin{center}
\noindent\fbox{\begin{minipage}{0.97\textwidth}
\small{
\begin{center}
\renewcommand{\arraystretch}{1.9}
\begin{tabular}{|c|c|c|c|c|c|c|c|c|c|c|}
\hline
$d=8$&$d=10$&$d=12$&$d=15$&$d=20$&$d=25$&$d=35$&$d=50$&$d=75$&$d=100$&$d=500$
\\\hline
\large{$\!\frac{12.95}{N^{1/8}}\!$}
&\large{$\!\frac{9.06}{N^{1/10}}\!$}
&\large{$\!\frac{7.73}{N^{1/12}}\!$}
&\large{\boldmath$\!\frac{6.76}{N^{1/15}}\!$}
&\large{\boldmath$\!\frac{6.00}{N^{1/20}}\!$}
&\large{\boldmath$\!\frac{5.60}{N^{1/25}}\!$}
&\large{\boldmath$\!\frac{5.17}{N^{1/35}}\!$}
&\large{\boldmath$\!\frac{4.85}{N^{1/50}}\!$}
&\large{\boldmath$\!\frac{4.60}{N^{1/75}}\!$}
&\large{\boldmath$\!\frac{4.47}{N^{1/100}}\!$}
&\large{\boldmath$\!\frac{4.12}{N^{1/500}}\!$}
\\\hline
\large{\boldmath$\!\frac{5.41}{N^{1/8}}\!$}
&\large{\boldmath$\!\frac{5.84}{N^{1/10}}\!$}
&\large{\boldmath$\!\frac{6.35}{N^{1/12}}\!$}
&\large{$\!\frac{7.13}{N^{1/15}}\!$}
&\large{$\!\frac{8.36}{N^{1/20}}\!$}
&\large{$\!\frac{9.47}{N^{1/25}}\!$}
&\large{$\!\frac{11.38}{N^{1/35}}\!$}
&\large{$\!\frac{13.76}{N^{1/50}}\!$}
&\large{$\!\frac{17.01}{N^{1/75}}\!$}
&\large{$\!\frac{19.73}{N^{1/100}}\!$}
&\large{$\!\frac{44.60}{N^{1/500}}\!$}
\\\hline
\end{tabular}
\end{center}
{\sc Table 4.} Bound of $\sqrt{\E[\cT_2^{(2)}(\mu_N,\mu)]}$ for $N\geq 1$
if $\mu\in \cP(B_2(0,1/2))$, using the bound proposed for the Euclidean norm in 
Theorem \ref{mr}-(iii) (second line)
and using $\sqrt{d}$ times the bound proposed for the maximum norm (third line). In bold the one to be used.}
\end{minipage}}
\end{center}

\vip
As we can see, in large dimension, the bounds concerning $p=1$ and $p=2$ are very similar.
We also see that, for $p=1$ and $p=2$, it is better to use $\sqrt{d}$ 
times the bound proposed for the maximum norm
when $d\in \{8,\dots,12\}$. If using \eqref{Kdc}, we find that 
for $d\in \{2,\dots,7\}$, it is also better to use $\sqrt{d}$ 
times the bound proposed for the maximum norm.

\bla
\subsection{Numerical values in the non-compact case}

Here we study how \blu $\theta^{(\mm)}_{d,p,q}$ \bla is far from $1$.
When $p=d/2$, we observe that \blu $N \mapsto \theta^{(\mm)}_{d,p,q,N}$ \bla is decreasing, and we 
e.g. study \blu $\theta^{(\mm)}_{d,p,q,100}$ \bla (which controls \blu $\theta^{(\mm)}_{d,p,q,N}$ \bla 
for all $N\geq 100$).
We start with the maximum norm.
\vip

\begin{center}
\noindent\fbox{\begin{minipage}{0.95\textwidth}
\small{
\begin{center}
\renewcommand{\arraystretch}{1.3}
\begin{tabular}{|c|c|c|c|c|c|c|c|c|c|c|}
\hline
$d=1$  &$d=2$  &$d=3$  &$d=4$  &$d=5$  &$d=6$  &$d=7$  &$d=8$  &$d=9$  &$d=100$ &$d=500$
\\\hline
$4.4$  &$4.2$  &$3.3$  &$3.0$  &$2.9$  &$2.8$  &$2.8$  &$2.7$  &$2.7$  &$2.5$   &$2.4$
\\\hline
$9.8$  &$9.4$  &$7.3$  &$6.8$  &$6.5$  &$6.4$  &$6.3$  &$6.2$  &$6.1$  &$5.5$   &$5.5$
\\\hline
$40.4$ &$39.0$ &$29.9$ &$27.7$ &$26.6$ &$25.9$ &$25.3$ &$25.0$ &$24.6$ &$22.3$  &$22.1$
\\\hline
\end{tabular}
\end{center}
{\sc Table 5.} Here $p=1$.
Minimum value of $q$ so that $\theta_{d,1,q}^{(\infty)} \leq c$ if
$d\neq 2$ or $\theta_{d,1,q,100}^{(\infty)} \leq c$ if $d= 2$,
with $c=4$ (second line), $c=2$ (third line), $c=1.25$ (fourth line).}
\end{minipage}}
\end{center}
\vip
\begin{center}
\noindent\fbox{\begin{minipage}{0.95\textwidth}
\small{
\begin{center}
\renewcommand{\arraystretch}{1.3}
\begin{tabular}{|c|c|c|c|c|c|c|c|c|c|c|}
\hline
$d=1$  &$d=2$  &$d=3$  &$d=4$  &$d=5$  &$d=6$  &$d=7$  &$d=8$  &$d=9$  &$d=100$ &$d=500$
\\\hline
$5.1$  &$5.0$  &$4.9$  &$4.9$  &$4.1$  &$3.7$  &$3.5$  &$3.3$  &$3.2$  &$2.6$   &$2.5$
\\\hline
$9.5$  &$8.9$  &$8.4$  &$8.4$  &$6.9$  &$6.4$  &$6.0$  &$5.8$  &$5.7$  &$4.6$   &$4.5$
\\\hline
$37.0$ &$34.5$ &$32.4$ &$32.5$ &$26.7$ &$24.6$ &$23.4$ &$22.5$ &$21.9$ &$17.7$  &$17.4$
\\\hline
\end{tabular}
\end{center}
{\sc Table 6.} Here $p=2$.
Minimum value of $q$ so that $\sqrt{\theta_{d,2,q}^{(\infty)}} \leq c$ if
$d\neq 4$ or $\sqrt{\theta_{d,2,q,100}^{(\infty)}} \leq c$ if $d=4$,
with $c=4$ (second line), $c=2$ (third line), $c=1.25$ (fourth line).}
\end{minipage}}
\end{center}
\vip
Comparing Tables 5 and 6, it seems clear that, at least for large values
of $d$, it is preferable to use the bound \eqref{grosdefaut}.

\vip

We do the same job concerning the Euclidean norm. 
We only deal with $\theta_{d,1,q}^{(2)}$ as defined in Theorem \ref{mr} for simplicity,
even if we recall that it is preferable to use \eqref{neq} and the bound concerning the maximum norm
for low dimensions.
\vip

\blu
\begin{center}
\noindent\fbox{\begin{minipage}{0.95\textwidth}
\small{
\begin{center}
\renewcommand{\arraystretch}{1.3}
\begin{tabular}{|c|c|c|c|c|c|c|c|c|c|c|c|}
\hline
$\!d=8\!$ &$\!d=10\!$ &$\!d=12\!$ &$\!d=15\!$ &$\!d=20\!$ &$\!d=25\!$ &$\!d=35\!$ &$\!d=50\!$ &$\!d=75\!$ &$\!d=100\!$ &$\!d=500\!$
\\\hline
$2.4$     &$2.4$      &$2.3$      &$2.3$      &$2.3$      &$2.3$      &$2.3$      &$2.3$      &$2.3$      &$2.3$       &$2.3$
\\\hline
$5.3$     &$5.2$      &$5.2$      &$5.1$      &$5.1$      &$5.1$      &$5.0$      &$5.0$      &$5.0$      &$5.0$       &$5.0$
\\\hline
$21.8$    &$21.5$     &$21.3$     &$21.1$     &$20.9$     &$20.8$     &$20.7$     &$20.6$     &$20.6$     &$20.6$      &$20.5$
\\\hline
\end{tabular}
\end{center}
{\sc Table 7.} Here $p=1$.
Minimum value of $q$ so that $\theta_{d,1,q}^{(2)} \leq c$
with $c=4$ (second line), $c=2$ (third line), $c=1.25$ (fourth line).}
\end{minipage}}
\end{center}
\vip
\begin{center}
\noindent\fbox{\begin{minipage}{0.95\textwidth}
\small{
\begin{center}
\renewcommand{\arraystretch}{1.3}
\begin{tabular}{|c|c|c|c|c|c|c|c|c|c|c|c|}
\hline
$\!d=8\!$ &$\!d=10\!$ &$\!d=12\!$ &$\!d=15\!$ &$\!d=20\!$ &$\!d=25\!$ &$\!d=35\!$ &$\!d=50\!$ &$\!d=75\!$ &$\!d=100\!$ &$\!d=500\!$
\\\hline
$3.2$     &$3.0$      &$2.9$      &$2.8$      &$2.7$      &$2.6$      &$2.6$      &$2.5$      &$2.5$      &$2.5$      &$2.5$
\\\hline
$5.4$     &$5.1$      &$4.9$      &$4.7$      &$4.5$      &$4.5$      &$4.4$      &$4.3$      &$4.2$      &$4.2$      &$4.2$ 
\\\hline
$20.5$    &$19.3$     &$18.5$     &$17.9$     &$17.3$     &$17.0$     &$16.6$     &$16.4$     &$16.2$     &$16.1$     &$15.9$ 
\\\hline
\end{tabular}
\end{center}
{\sc Table 8.} Here $p=2$.
Minimum value of $q$ so that $\sqrt{\theta_{d,2,q}^{(2)}} \leq c$
with $c=4$ (second line), $c=2$ (third line), $c=1.25$ (fourth line).}
\end{minipage}}
\end{center}
\vip

Comparing Tables 7 and 8, it seems again clear that
it is not vain to use the bound \eqref{grosdefaut}.
\bla

\subsection{On a possible lowerbound}

As mentioned to us by Pag\`es, we have the following lowerbound, 
holding for any \blu $\mm\in [1,\infty)\cup\{\infty\}$. \bla
Consider $X_1,\dots,X_N$ independent and $\mu$-distributed.
It holds that \blu $\cT_p^{(\mm)}(\mu_N,\mu) \geq \cS_p^{(\mm)}(\mu;X_1,\dots,X_N)$, where
$$
\cS_p^{(\mm)}(\mu;x_1,\dots,x_d)=\inf\Big\{\cT_p^{(\mm)}\Big(\mu,\sum_{i=1}^N \alpha_i \delta_{x_i}\Big) 
: (\alpha_i)_{i=1,\dots,N} \in [0,1]^N,\;\sum_{i=1}^N \alpha_i=1\Big\},
$$
and Luschgy-Pag\`es show in \cite{lp} that, under some technical conditions on $\mu$,
$$
\lim_{N\to \infty} N^{-p/d} \E[\cS_p^{(\mm)}(\mu;X_1,\dots,X_n)]=\frac{\Gamma(1+p/d)}{[\lambda_d(B_{(\mm)}(0,1))]^{p/d}} 
\int_{\rd} [f(x)]^{1-p/d} \dd x,
$$
where $f$ is the density of $\mu$, where $\lambda_d$ is the Lebesgue measure on $\rd$
and where $\Gamma$ is the classical $\Gamma$ function.
Choosing for $\mu$ the uniform law on $B_\mm(0,1/2)$, we find that
$$
\liminf_{N \to \infty} N^{-p/d}\E[\cT_p^{(\mm)}(\mu_N,\mu)] 
\geq  \lim_{N\to \infty} N^{-p/d} \E[\cS_p^{(\mm)}(\mu;X_1,\dots,X_n)]
=\frac{\Gamma(1+p/d)}{2^p}=: \gamma_{d,p},
$$
to be compared with $\kappa_{d,p}^{(\mm)}$ when $p \in (0,d/2)$.
This may be a rough lowerbound, because this is an asymptotic bound as $N\to \infty$, and because
$\cS_p^{(\mm)}(\mu;X_1,\dots,X_N)$ is likely to be really smaller 
than $\cT_p^{(\mm)}(\mu_N,\mu)$ (in particular it decreases in $N^{-p/d}$ when $p>d/2$ while $\cT_p^{(\mm)}(\mu_N,\mu)$
decreases in $N^{-1/2}$ in general when $p>d/2$).\bla

\vip

We have $\gamma_{d,1}\in (0.44,0.5)$ for all $d \geq 3$, to be compared with
the numerators in Tables 1 and 3. Hence when $p=1$ and say $d\geq 4$, see Table 1,
$\kappa_{d,1}^{(\infty)}$ is at worst \blu $(2.45/0.44)\simeq 5.6$ \bla times too large.
When $p=1$ and $d\geq 8$, see Table 3, $\min\{\kappa_{d,1}^{(2)},\sqrt d \kappa_{d,1}^{(\infty)}\}$
is at worst \blu $(6.74/0.44)\simeq 15.4$ \bla times too large.
We hope this is pessimistic.
\vip
It holds that $\sqrt{\gamma_{d,2}}\in (0.47,0.5)$ for all $d \geq 5$, 
to be compared with the numerators in Tables 2 and 4. When $p=2$ and $d\geq 5$, see Table 2,
$(\kappa_{d,2}^{(\infty)})^{1/2}$ is at worst \blu $(2.75/0.47)\simeq 5.9$ \bla times too large. 
When $p=2$ and $d\geq 8$, see Table 4, $\min\{(\kappa_{d,2}^{(2)})^{1/2},\sqrt d (\kappa_{d,2}^{(\infty)})^{1/2}\}$
is at worst \blu $(6.76/0.47)\simeq 14.4$ \bla times too large.
Again, we hope this is pessimistic.

\vip

We do not discuss the non compact case, but the numerical results do not seem
quite favorable.

\subsection{The case with a low order finite moment}
Since this last result is likely to be much
less useful for applications, we only treat the case of the maximum norm.

\begin{thm}\label{mrn}
Let $q>p>0$ such that $q<\min\{2p,dp/(d-p)\}$, i.e. $q\in(p,2p)$ if 
$p\geq d/2$ and $q\in(p,dp/(d-p))$ if $p\in (0,d/2]$. 
Fix $\mu \in \cP(\rd)$.
For all $N\geq 1$,
$$
\E[\cT_p^{(\infty)}(\mu_N,\mu)]\leq 2^p [\cM_q^{(\infty)}(\mu)]^{p/q} \frac{\zeta_{d,p,q}^{(\infty)}}{N^{(q-p)/q}},
$$
where, setting $\e_p=\max\{2^{-1},2^{-p}\}$,
$$
\zeta_{d,p,q}^{(\infty)}=\Big[\e_p 2^{2p/q-1} +\Big(\frac{2^{d-1}}{2^{d/2}-1}\Big)^{2(q-p)/q}\frac{1-2^{-p}}{1-2^{d-p-dp/q}}\Big]
\min_{a\in(1,\infty)} \Big(\frac{a^p}{a^{p-q/2}-1}+ \frac{a^p}{1-a^{p-q}}\Big).
$$
\end{thm}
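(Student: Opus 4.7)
The plan is to adapt the dyadic partitioning argument that underlies Theorem \ref{mr} to the tail-dominated regime, by combining an outer annular decomposition of $\rd$ (to handle the fact that $\mu$ has only a $q$-th moment with $q<2p$) with the inner dyadic decomposition of each cell already used in the proof of Theorem \ref{mr}. I fix a parameter $a\in(1,\infty)$, which will be minimized at the end and produces the factor $\min_{a\in(1,\infty)}$ in $\zeta_{d,p,q}^{(\infty)}$. Setting $C_0=B_\infty(0,1)$ and $C_n=B_\infty(0,a^n)\setminus B_\infty(0,a^{n-1})$ for $n\geq 1$, and writing $p_n=\mu(C_n)$, $\widehat{p}_n=\mu_N(C_n)$, the moment hypothesis gives the tail decay $p_n\leq a^{-q(n-1)}\cM_q^{(\infty)}(\mu)$ for $n\geq 1$, while each $C_n$ sits inside a cube of $|\cdot|_\infty$-diameter $2a^n$.

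A standard partition coupling yields
$$
\cT_p^{(\infty)}(\mu_N,\mu)\leq \sum_{n\geq 0}(\text{in-cell cost on }C_n)+2^{p-1}\sum_{n\geq 0}(2a^n)^p|\widehat{p}_n-p_n|,
$$
where the factor $\e_p$ that appears in $\zeta_{d,p,q}^{(\infty)}$ originates from the elementary bound $|x-y|_\infty^p\leq \e_p^{-1}(|x|_\infty^p+|y|_\infty^p)$ used to separate the rebalancing and in-cell costs. On the rebalancing side, $\E|p_n-\widehat{p}_n|\leq \min\{p_n,\sqrt{p_n/N}\}$; on the in-cell side, rescaling $C_n$ to the unit cube and applying the compact-case bound from Theorem \ref{mr} produces the dyadic inner-sum constant $2^{d-1}/(2^{d/2}-1)$ visible in $\zeta_{d,p,q}^{(\infty)}$, the convergence of the inner sum $\sum_l 2^{l(d-p-dp/q)}$ requiring precisely the hypothesis $q<dp/(d-p)$.

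The crucial step is the truncation of the outer annular index at the critical level $n^\star$ defined by $p_{n^\star}\sim 1/N$, i.e.\ $a^{n^\star}\sim a(N\cM_q^{(\infty)}(\mu))^{1/q}$. For $n<n^\star$, the Bernoulli-variance bound $\sqrt{p_n/N}$ is sharper, and substituting the tail decay produces a partial geometric sum with ratio $a^{p-q/2}>1$ (from $q<2p$), truncated at $n^\star$, of order $a^p(a^{n^\star})^{p-q/2}/(a^{p-q/2}-1)$. For $n\geq n^\star$, the trivial bound $p_n$ is sharper, producing a convergent geometric tail with ratio $a^{p-q}<1$, of order $a^p(a^{n^\star})^{p-q}/(1-a^{p-q})$. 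Substituting the value of $a^{n^\star}$ back, both contributions evaluate to $[\cM_q^{(\infty)}(\mu)]^{p/q}N^{-(q-p)/q}$ times the coefficients $a^p/(a^{p-q/2}-1)$ and $a^p/(1-a^{p-q})$ --- precisely the two summands under the $\min_{a\in(1,\infty)}$ in $\zeta_{d,p,q}^{(\infty)}$. The same truncation applied to the in-cell contribution yields the in-cell portion of the bracketed prefactor, the exponent $2(q-p)/q$ on the inner dyadic constant emerging from a H\"older-type interpolation between the two regimes; this separates the rebalancing coefficient $\e_p 2^{2p/q-1}$ from the in-cell coefficient $(2^{d-1}/(2^{d/2}-1))^{2(q-p)/q}(1-2^{-p})/(1-2^{d-p-dp/q})$. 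Minimizing over $a$ and collecting constants gives $\zeta_{d,p,q}^{(\infty)}$. The main obstacle is the bookkeeping: tracking the exact constants through the two-scale decomposition while preserving the sharp interpolation responsible for the exponent $2(q-p)/q$ demands a careful balancing between the mass-discrepancy bound and the in-cell matching bound, and the threshold $n^\star$ must be positioned so that the two partial sums telescope cleanly into the stated form, without generating extraneous factors relative to $\e_p$.
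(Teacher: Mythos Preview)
Your outline tracks the paper's argument closely: the annular decomposition indexed by $a>1$, the partition coupling splitting into a rebalancing cost and an in-cell cost, and the interpolation $\min\{p_n,\sqrt{p_n/N}\}$ via a threshold $n^\star$ are exactly what the paper does. Your truncation at $n^\star$ is precisely the content of Lemma~\ref{phi}, inequality~\eqref{tt3}, applied with $r=a$, $\alpha=q-p$, $\beta=q/2$, and the paper invokes that lemma rather than redoing the split by hand. The identification of the two geometric ratios $a^{p-q/2}>1$ and $a^{p-q}<1$, and of the inner-$\ell$ exponent $d-p-dp/q$ whose negativity encodes the hypothesis $q<dp/(d-p)$, is correct.

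There is, however, a genuine gap. Your interpolation handles the annuli $n\geq 1$, where the moment bound $p_n\leq a^{-q(n-1)}\cM_q^{(\infty)}(\mu)$ is available, and it produces the stated main term. But the central cell $n=0$ is not controlled by $\cM_q^{(\infty)}(\mu)$: one has only $p_0\leq 1$, so the $n=0$ rebalancing contribution is of order $N^{-1/2}$ and the $n=0$ in-cell contribution is at most $2^p$ (or at best $N^{-p/d}$ after the inner dyadic sum). Neither term has the form $[\cM_q^{(\infty)}(\mu)]^{p/q}N^{-(q-p)/q}$; they have the wrong homogeneity in $\cM_q^{(\infty)}(\mu)$. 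No positioning of $n^\star$ can absorb them, contrary to what your last paragraph suggests. The paper disposes of these terms by a scaling trick you omit: once one has the preliminary bound
\[
\E[\cT_p^{(\infty)}(\mu_N,\mu)]\leq 2^p\Big(\frac{\e_p}{\sqrt N}+1+\rho_a\,\frac{[\cM_q^{(\infty)}(\mu)]^{p/q}}{N^{(q-p)/q}}\,[\cdots]\Big),
\]
one applies it to the pushforward $\mu^\alpha$ of $\mu$ under $x\mapsto\alpha x$, uses $\E[\cT_p^{(\infty)}(\mu_N^\alpha,\mu^\alpha)]=\alpha^p\,\E[\cT_p^{(\infty)}(\mu_N,\mu)]$ and $\cM_q^{(\infty)}(\mu^\alpha)=\alpha^q\cM_q^{(\infty)}(\mu)$, divides by $\alpha^p$, and lets $\alpha\to\infty$. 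The main term is scale-invariant and survives; the two extraneous terms scale as $\alpha^{-p}$ and vanish. This step is essential to obtain the exact constant $\zeta_{d,p,q}^{(\infty)}$ with pure $[\cM_q^{(\infty)}(\mu)]^{p/q}$ dependence.
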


\subsection{Plan of the paper}
In Section \ref{gu}, we provide a general estimate of the transport cost between two measures.
In Section \ref{gume}, we apply this general estimate to derive a bound of
$\E[\cT_p(\mu_N,\mu)]$, for a general norm, for all the values of $p>0$ and in any dimension.
In Section \ref{series}, we precisely study some elementary series.
We obtain a bound of
$\E[\cT_p(\mu_N,\mu)]$, for a general norm, separating the cases $p>d/2$, $p=d/2$ and $p\in (0,d/2)$
in Section \ref{titrenaze}.
We conclude the proof of Theorem \ref{mr} for the maximum norm in Section \ref{concmax}
and for the \blu other norms \bla in Section \ref{conceuc}.
Finally, we check Theorem \ref{mrn} in Section \ref{lom}.

\section{Upperbound of the transport cost between two measures}\label{gu}

The result we prove in this section, Proposition \ref{depart}, 
is more or less classical, see  Boissard-Le Gouic \cite[Proposition 1.1]{blg},
Dereich-Scheutzow-Schottstedt \cite[Lemma 2 and Theorem 3]{dss},
Fournier-Guillin \cite[Lemma 5]{fg} and Weed-Bach \cite[Proposition 1]{wb}.
As noted in \cite{wb}, similar ideas can already be found in Ajtai-Koml\'os-Tusn\'ady \cite{akt}.
However, we provide a slightly more
precise version, that allows us to get some smaller constants.
We consider fixed the following objects.

\begin{set}\label{sss}
(a) We fix a norm $|\cdot|$ on $\rr^d$. We denote by $B(x,r)=\{y\in\rd : |x-y|<r\}$ the corresponding balls,
by $\cT_p(\mu,\nu)=\inf_{\xi \in \cH(\mu,\nu)} \int_{\rd\times\rd}|x-y|^p \xi(\dd x,\dd y)$
the corresponding transport cost, and by $\cM_q(\mu)=\int_{\rd}|x|^q \mu(\dd x)$ the corresponding moments.
\vip
(b) Let $G_0=B(0,1)$. For $a>1$, we set $G_0^a=G_0$ and, for all $n\geq 1$, 
$G_n^a=B(0,a^n)\setminus B(0,a^{n-1})$.
\vip
(c) We consider a family $(\cQ_\ell)_{\ell=0,\dots,k}$ of nested partitions of $G_0$ such that $\cQ_0=\{G_0\}$.
For each $\ell=1,\dots,k$, each $C \in \cQ_{\ell}$,
there exists a unique $F \in \cQ_{\ell-1}$ such that $C \subset F$; we then say that $C$ is a child of $F$. 
For $\ell=0,\dots,k$, we denote by $|\cQ_\ell|$ the cardinal of 
$\cQ_\ell$ and we set
$$
\delta_\ell = \max_{C \in \cQ_\ell} \sup_{x,y \in C} |x-y|.
$$
\end{set}

For $C\subset \rr^d$ and $r>0$, we put $rC=\{rx : x \in C\}$. Recall that $\e_p=2^{-1}\lor 2^{-p}$.

\begin{prop}\label{depart}
We adopt Setting \ref{sss} and consider
$\mu,\nu \in \cP(\rr^d)$. 
For all $a>1$,
there are some nonnegative numbers 
$(r_{a,n,\ell}(\mu,\nu))_{n\geq 0,\ell= 0,\dots,k-1}$ satisfying 
\begin{gather}
\sum_{\ell=0}^{k-1} r_{a,n,\ell}(\mu,\nu)\leq 1\label{rnl1}
\end{gather}
for all $n\geq 0$ and, with the convention that $0/0=0$,
\begin{gather}
r_{a,n,\ell}(\mu,\nu)\!\leq\! \frac12\sum_{F\in\cQ_\ell} 
\Big(\frac{\mu(a^nF \cap G_n^a)}{\mu(G_n^a)}\land\frac{\nu(a^nF \cap G_n^a)}{\nu(G_n^a)}\Big)\hskip-5pt
\sum_{C \!\hbox{\tiny{ child of }}\! F}
\Big|\frac{\mu(a^nC \cap G_n^a)}{\mu(a^nF \cap G_n^a)}-\frac{\nu(a^nC \cap G_n^a)}{\nu(a^nF \cap G_n^a)} \Big|
\label{rnl2}
\end{gather}
for all $n\geq 0$, all $\ell=0,\dots,k-1$, 
and such that for all $p>0$,
\begin{align*}
\cT_p(\mu,\nu) \leq& \sum_{n\geq 0} a^{pn}\Big( 2^p\e_p |\mu(G_n^a)-\nu(G_n^a)| + 
(\mu(G_n^a)\land\nu(G_n^a)) \Big[\delta_k^p+\sum_{\ell= 0}^{k-1} \delta_\ell^p r_{a,n,\ell}(\mu,\nu)\Big] \Big).
\end{align*}
\end{prop}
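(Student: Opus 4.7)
The plan is to build an explicit coupling $\xi\in\cH(\mu,\nu)$ whose cost $\int_{\rd\times\rd}|x-y|^p\xi(\dd x,\dd y)$ matches the stated bound. I split $\mu$ and $\nu$ according to the annuli $(G_n^a)_{n\ge 0}$, set $\alpha_n=\mu(G_n^a)\land\nu(G_n^a)$ and $e_n=\mu(G_n^a)-\nu(G_n^a)$, and I decompose $\xi=\xi^{\rm cross}+\sum_n\xi_n^{\rm match}$: within each annulus, $\xi_n^{\rm match}$ pairs $\alpha_n$ units of $\mu|_{G_n^a}$ with $\alpha_n$ units of $\nu|_{G_n^a}$ hierarchically along the scaled partitions $a^n\cQ_\ell$; the leftover excesses $(e_n)_+$ and $(e_m)_-$ are shipped between different annuli by $\xi^{\rm cross}$. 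Marginal checks reduce to $(e_n)_++\alpha_n=\mu(G_n^a)$ and $(e_n)_-+\alpha_n=\nu(G_n^a)$.

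For $\xi^{\rm cross}$, from $|x-y|\le|x|+|y|$ and $(u+v)^p\le 2^p\e_p(u^p+v^p)$ (which reduces to $2^{p-1}(u^p+v^p)$ for $p\ge 1$ and $u^p+v^p$ for $p\in(0,1]$) one gets $|x-y|^p\le 2^p\e_p(a^{pn}+a^{pm})$ whenever $x\in G_n^a$ and $y\in G_m^a$. Pick any transport plan $(\pi_{n,m})$ on $\nn\times\nn$ with $\sum_m\pi_{n,m}=(e_n)_+$ and $\sum_n\pi_{n,m}=(e_m)_-$ (which exists since $\sum_n e_n=0$) and lift it to $\rd\times\rd$ using the normalized conditional distributions of $\mu|_{G_n^a}$ and $\nu|_{G_m^a}$. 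Summation yields
$$\sum_{n,m}\pi_{n,m}(a^{pn}+a^{pm})=\sum_n a^{pn}\bigl((e_n)_++(e_n)_-\bigr)=\sum_n a^{pn}|e_n|,$$
giving the contribution $\sum_n a^{pn}\cdot 2^p\e_p|\mu(G_n^a)-\nu(G_n^a)|$ to the total cost.

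For $\xi_n^{\rm match}$, fix $n$ with $\alpha_n>0$ and set $\tilde\mu_n=\mu|_{G_n^a}/\mu(G_n^a)$, $\tilde\nu_n=\nu|_{G_n^a}/\nu(G_n^a)$; write $m^\mu_F=\tilde\mu_n(a^n F\cap G_n^a)$ and similarly $m^\nu_F$. Couple $\tilde\mu_n$ and $\tilde\nu_n$ top-down along the scaled tree $a^n\cQ_\ell$ (each cell at level $\ell$ has diameter at most $a^n\delta_\ell$), maintaining the invariant that the amount of each marginal allocated to $F\in\cQ_\ell$ is some $a_F\le\min(m^\mu_F,m^\nu_F)$. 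Refining from $F$ to its children: match the $\min$ of the conditional child-masses $m^\mu_C/m^\mu_F$ and $m^\nu_C/m^\nu_F$ inside each child $C$ (this preserves the invariant for $C$), and route the residual discrepancy between sibling children at cost $\le(a^n\delta_\ell)^p$ per unit. Define $r_{a,n,\ell}$ as the total mass routed at level $\ell$; the identity $\frac12\sum_C|x_C-y_C|=\sum_C(x_C-y_C)_+$ (valid when $\sum_C x_C=\sum_C y_C$), applied to $x_C=m^\mu_C/m^\mu_F$ and $y_C=m^\nu_C/m^\nu_F$, together with $a_F\le\min(m^\mu_F,m^\nu_F)$, yields exactly the bound \eqref{rnl2}. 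At the leaves $\cQ_k$ the still-allocated matched mass (of total at most $1$) is paired within each leaf at cost $\le(a^n\delta_k)^p$ per unit. Rescaling by $\alpha_n$ gives the within-annulus cost bound $\alpha_n a^{pn}\bigl[\delta_k^p+\sum_{\ell=0}^{k-1}\delta_\ell^p r_{a,n,\ell}\bigr]$.

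Finally, \eqref{rnl1} follows by induction on $\ell$: the matched mass still allocated after level $\ell$ equals $1-\sum_{j\le\ell}r_{a,n,j}$, so the mass routable at level $\ell+1$ cannot exceed this remainder. The hard part is keeping the top-down accounting consistent: one must carefully track the allocated amount $a_F$ at each node, verify that the ``$a_F\le\min(m^\mu_F,m^\nu_F)$'' invariant really is preserved at every refinement, and check that the identity transforming routed mass into $(1/2)\sum|\cdots|$ is applied correctly. This is the hierarchical total-variation style argument pioneered in \cite{akt} and refined in \cite{blg,dss,fg,wb}; the mild sharpening here is using $\min(m^\mu_F,m^\nu_F)$ rather than $1$ as the outer weight in \eqref{rnl2}, which is what ultimately produces the smaller constants in the later sections.
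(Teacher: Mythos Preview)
Your proposal is correct and follows essentially the same route as the paper: the paper splits the argument into a ``compact'' lemma (Lemma~\ref{cc}) that builds the hierarchical coupling on $B(0,1)$ with $u_\ell(\mu,\nu)=\sum_{C_\ell\in\cQ_\ell}\bigl(\prod_{j=1}^\ell\rho_{f_j(C_\ell)}\bigr)q_{C_\ell}$ and a ``non-compact'' lemma (Lemma~\ref{l2}) that handles the annulus decomposition via $\xi=\sum_n(\mu(G_n^a)\land\nu(G_n^a))\xi_n+q\,\alpha\otimes\beta$, and your $a_F$ is exactly their $\prod_{j=1}^\ell\rho_{f_j(F)}$ with the same invariant $a_F\le\mu(F)\land\nu(F)$. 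The only cosmetic difference is that the paper uses the product coupling $\alpha\otimes\beta$ for the cross term whereas you allow an arbitrary transport plan $(\pi_{n,m})$, but since you only invoke the crude bound $|x-y|^p\le 2^p\e_p(a^{pn}+a^{pm})$ this extra freedom is not exploited.
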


The coefficients $r_{a,n,\ell}(\mu,\nu)$ are  actually explicit, but it seems difficult
to use more than the properties \eqref{rnl1}-\eqref{rnl2}.
We start with the compact case.

\begin{lem}\label{cc}
Let $\mu,\nu \in \cP(B(0,1))$. There is
$(u_{\ell}(\mu,\nu))_{\ell=0,\dots,k-1} \in \rr_+^{k}$ satisfying
\begin{gather}
\sum_{\ell=0}^{k-1} u_\ell(\mu,\nu)\leq 1 \label{sul}
\end{gather}
and
\begin{gather}
u_{\ell}(\mu,\nu)\leq \frac12\sum_{F\in\cQ_\ell} (\mu(F)\land \nu(F)) \hskip-5pt\sum_{C \!\hbox{\tiny{ child of }}\! F}
\Big|\frac{\mu(C)}{\mu(F)}-\frac{\nu(C)}{\nu(F)} \Big|, \qquad \ell=0,\dots,k-1, \label{ul}
\end{gather}
and such that for all $p>0$,
\begin{align*}
\cT_p(\mu,\nu) \leq& \delta_k^p+ \sum_{\ell=0}^{k-1} \delta_\ell^{p} u_{\ell}(\mu,\nu).
\end{align*}
\end{lem}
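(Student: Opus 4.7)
The plan is to build an explicit coupling of $\mu$ and $\nu$ by a top-down randomized descent through the nested partition tree, and to read off the numbers $u_\ell$ from the probabilities of events arising in this construction. Concretely, I will produce random variables $(X,Y)$ on some probability space with $X\sim\mu$, $Y\sim\nu$ by recursively choosing the level-$\ell$ cells $F_\ell(X),F_\ell(Y)\in\cQ_\ell$ containing $X$ and $Y$, and then sampling $X,Y$ inside their leaf cells according to the appropriate conditional distributions (independently is fine). At every node $F\in\cQ_\ell$ with children $C$, conditional on $F_\ell(X)=F_\ell(Y)=F$, I descend using the \emph{maximal coupling} of the two child distributions $m_F(C):=\mu(C)/\mu(F)$ and $n_F(C):=\nu(C)/\nu(F)$, so that $X$ and $Y$ both fall into the same child $C$ with probability $\min(m_F(C),n_F(C))$; once they lie in different cells, I let the two descend independently.

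The first key step is the inductive upper bound $\mathbb{P}(F_\ell(X)=F_\ell(Y)=F)\le \mu(F)\land \nu(F)$ for every $\ell$ and every $F\in\cQ_\ell$. The base case $\ell=0$ is immediate since both sides equal $1$. For the inductive step, the construction yields, writing $F^\star\in\cQ_{\ell-1}$ for the parent of $F$, $\mathbb{P}(F_\ell(X)=F_\ell(Y)=F)=\mathbb{P}(F_{\ell-1}(X)=F_{\ell-1}(Y)=F^\star)\cdot \min(m_{F^\star}(F),n_{F^\star}(F))$, and combining $\mathbb{P}(F_{\ell-1}(X)=F_{\ell-1}(Y)=F^\star)\le \mu(F^\star)$ with $\min(\dots)\le m_{F^\star}(F)=\mu(F)/\mu(F^\star)$ yields $\le\mu(F)$; the symmetric argument using $\nu$ gives $\le\nu(F)$.

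Next, I set $u_\ell:=\mathbb{P}(F_\ell(X)=F_\ell(Y),\,F_{\ell+1}(X)\neq F_{\ell+1}(Y))$, the probability that $X$ and $Y$ first separate between levels $\ell$ and $\ell+1$. By the identity $1-\sum_C\min(m_F(C),n_F(C))=\tfrac12\sum_C|m_F(C)-n_F(C)|$, the conditional next-level separation probability on $\{F_\ell(X)=F_\ell(Y)=F\}$ is exactly $\tfrac12\sum_C|m_F(C)-n_F(C)|$; combining with the bound from the previous paragraph gives \eqref{ul}. The $k$ events defining $u_0,\dots,u_{k-1}$ are pairwise disjoint, so $\sum_\ell u_\ell\le 1$, giving \eqref{sul}.

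Finally, if $X$ and $Y$ never separate then they share a leaf cell $F\in\cQ_k$, so $|X-Y|\le\delta_k$; if they first separate between levels $\ell$ and $\ell+1$, they lie in distinct children of a common cell $F\in\cQ_\ell$, so $|X-Y|\le\delta_\ell$. Splitting $\mathbb{E}[|X-Y|^p]$ over these mutually exclusive events, $\cT_p(\mu,\nu)\le\mathbb{E}[|X-Y|^p]\le \delta_k^p+\sum_{\ell=0}^{k-1}\delta_\ell^p u_\ell$. The main obstacle is the inductive bound in the second paragraph: the maximal coupling naturally only gives a product of the parent probability with a conditional minimum, and it is the observation that this product is dominated by $\mu(F)$ and by $\nu(F)$ separately that allows one to collapse it to $\mu(F)\land\nu(F)$; once this is established, the rest is routine bookkeeping on the tree.
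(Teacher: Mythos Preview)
Your proof is correct and builds exactly the same coupling as the paper: your top-down random descent with maximal coupling at each node produces the law $\xi_{G_0}$ that the paper constructs by reverse induction, and your $u_\ell$ coincide with the paper's $u_\ell=\sum_{C_\ell\in\cQ_\ell}\big(\prod_{j=1}^\ell\rho_{f_j(C_\ell)}\big)q_{C_\ell}$. The probabilistic framing makes \eqref{sul} immediate by disjointness and streamlines the estimate on $\E[|X-Y|^p]$, whereas the paper carries these out via explicit inductions (Steps~3 and~5), but the underlying argument is the same.
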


\begin{proof}
For all $0\leq i\leq\ell\leq k$ and $C\in \cQ_\ell$, let
$f_i(C)$ be the unique element of $\cQ_i$ containing $C$.

\vip

{\it Step 1: construction of the coupling.} For all $F \in \cQ_k$, we set
\begin{equation}\label{ttc}
\xi_{F}(\dd x,\dd y)=\frac{\mu|_{F}(\dd x)}{\mu(F)}\frac{\nu|_{F}(\dd y)}{\nu(F)}.
\end{equation}
Then by reverse induction, for $\ell\in\{0,\dots,k-1\}$ and $F\in \cQ_\ell$, we build
\begin{align}\label{dfck}
\xi_{F}(\dd x,\dd y)=\sum_{C \!\hbox{\tiny{ child of }}\! F}
\rho_C \xi_C(\dd x,\dd y) + q_F \alpha_F(\dd x) \beta_F(\dd y),
\end{align}
where
$$
\rho_C= \frac{\mu(C)}{\mu(F)}\land \frac{\nu(C)}{\nu(F)},
$$
which depends only on $C \in \cQ_{\ell+1}$ since $F=f_{\ell}(C)$, where
\begin{equation}\label{ano0}
q_F=\frac12\sum_{C \!\hbox{\tiny{ child of }}\! F}
\Big|\frac{\mu(C)}{\mu(F)}-\frac{\nu(C)}{\nu(F)} \Big|,
\end{equation}
and where
\begin{align*}
\alpha_F(\dd x)=& \frac1{q_F} \sum_{C \!\hbox{\tiny{ child of }}\! F} 
\Big(\frac{\mu(C)}{\mu(F)}-\frac{\nu(C)}{\nu(F)} \Big)_+\frac{\mu|_{C}(\dd x)}{\mu(C)},\\
\beta_F(\dd x)=& \frac1{q_F} \sum_{C \!\hbox{\tiny{ child of }}\! F} 
\Big(\frac{\nu(C)}{\nu(F)}-\frac{\mu(C)}{\mu(F)} \Big)_+\frac{\nu|_{C}(\dd x)}{\nu(C)}.
\end{align*}
It holds that $\alpha_F$ and $\beta_F$ are two probability measures on $F$, because
\begin{equation}\label{ano}
q_F=\sum_{C \!\hbox{\tiny{ child of }}\! F}
\Big(\frac{\mu(C)}{\mu(F)}-\frac{\nu(C)}{\nu(F)} \Big)_+
=\sum_{C \!\hbox{\tiny{ child of }}\! F}
\Big(\frac{\nu(C)}{\nu(F)}-\frac{\mu(C)}{\mu(F)} \Big)_+.
\end{equation}

{\it Step 2.} Here we show that $\xi_{G_0} \in \cH(\mu,\nu)$,
so that $\cT_p(\mu,\nu)\leq \int_{\rd\times\rd}|x-y|^p\xi_{G_0}(\dd x,\dd y)$.
We recall that $G_0=B(0,1)$ is the unique element of $\cQ_0$.

\vip
We actually prove by 
reverse induction that for all $\ell\in\{0,\dots,k\}$, all $F \in \cQ_\ell$, it holds that 
$\xi_F \in \cH(\frac{\mu|_{F}}{\mu(F)}, \frac{\nu|_{F}}{\nu(F)})$.
The result will then follow by choosing $\ell=0$ and $F=G_0$.
\vip
This is obvious if $\ell=k$, see \eqref{ttc}. 
Next, we assume that this holds true for some $\ell+1 \in \{1,\dots,k\}$,
and we consider $F\in \cQ_\ell$.
For $A\in\cB(\rr^d)$, we use \eqref{dfck} to write
$$
\xi_F(A\times \rr^d)=\sum_{C \!\hbox{\tiny{ child of }}\! F}\rho_C \xi_{C}(A\times \rr^d) + q_F\alpha_F(A)
=\sum_{C \!\hbox{\tiny{ child of }}\! F}\rho_C \frac{\mu(A\cap C)}{\mu(C)} + q_F\alpha_F(A)
$$
by induction assumption. Thus
$$
\xi_F(A\times \rr^d)=\hskip-5pt\sum_{C \!\hbox{\tiny{ child of }}\! F} \Big[\frac{\mu(C)}{\mu(F)}\land \frac{\nu(C)}{\nu(F)}
+\Big(\frac{\mu(C)}{\mu(F)}-\frac{\nu(C)}{\nu(F)} \Big)_+ \Big]\frac{\mu(A\cap C)}{\mu(C)}
=\hskip-5pt\sum_{C \!\hbox{\tiny{ child of }}\! F} \frac{\mu(C)}{\mu(F)}\frac{\mu(A\cap C)}{\mu(C)},
$$
whence $\xi_F(A\times \rr^d)=\mu(A\cap F)/\mu(F)$. One shows 
similarly that $\xi_F(\rr^d\times A)=\nu(A\cap F)/\nu(F)$.

\vip

{\it Step 3.} For $i\in\{0,\dots,k\}$ and $F\in \cQ_i$, we put $m_F=\int_{\rd\times\rd}|x-y|^p\xi_F(\dd x,\dd y)$.
In this step, we show by induction that for all $i\in\{0,\dots,k-1\}$,
\begin{equation}\label{rectd}
m_{G_0}\leq \delta_0^pq_{G_0}+\sum_{\ell=1}^i \delta_{\ell}^p\sum_{C_\ell\in \cQ_\ell} \Big(\prod_{j=1}^\ell\rho_{f_j(C_\ell)}\Big)
q_{C_\ell}+ \sum_{C_{i+1}\in\cQ_{i+1}} \Big(\prod_{j=1}^{i+1}\rho_{f_j(C_{i+1})}\Big)m_{C_{i+1}}.
\end{equation}

Recalling \eqref{dfck}, since the set of all the children of $G_0$ is $\cQ_1$, and since
$|x-y|\leq \delta_0$ for all $x,y\in G_0$, so that $\int_{\rd\times\rd}|x-y|^p\alpha_{G_0}(\dd x)\beta_{G_0}(\dd y)
\leq \delta_0^p$, we see that
$$
m_{G_0} \leq \delta_0^p q_{G_0} + \sum_{C_1 \in \cQ_1} \rho_{C_1} m_{C_1}
= \delta_0^p q_{G_0} + \sum_{C_1 \in \cQ_1} \rho_{f_1(C_1)} m_{C_1},
$$
which is \eqref{rectd} with $i=0$. Assume now that \eqref{rectd} holds true for some 
$i \in \{0,\dots,k-2\}$. For all $C_{i+1}\in\cQ_{i+1}$, we use \eqref{dfck} and 
that $\int_{\rd\times\rd}|x-y|^p\alpha_{C_{i+1}}(\dd x)\beta_{C_{i+1}}(\dd y) \leq \delta_{i+1}^p$
to write
$$
m_{C_{i+1}} \leq \delta_{i+1}^p q_{C_{i+1}} + \sum_{C_{i+2} \!\hbox{\tiny{ child of }}\! C_{i+1}} 
\rho_{C_{i+2}}m_{C_{i+2}}.
$$
Hence, since $f_j(C_{i+2})=f_j(C_{i+1})$
for all $j=1,\dots,i+1$ if $C_{i+2}$ is a child of $C_{i+1}$,
\begin{align*}
\sum_{C_{i+1}\in\cQ_{i+1}} \hskip-5pt \Big(\prod_{j=1}^{i+1}\rho_{f_j(C_{i+1})}\Big)m_{C_{i+1}} \leq &
 \delta_{i+1}^p \hskip-5pt
\sum_{C_{i+1}\in\cQ_{i+1}} \hskip-5pt\Big(\prod_{j=1}^{i+1}\rho_{f_j(C_{i+1})}\Big)q_{C_{i+1}}
+ \hskip-5pt\sum_{C_{i+2}\in\cQ_{i+2}} \hskip-5pt\Big(\prod_{j=1}^{i+2}\rho_{f_j(C_{i+2})}\Big)m_{C_{i+2}}.
\end{align*}
This last formula, inserted in \eqref{rectd}, gives \eqref{rectd} with $i+1$ instead of $i$.

\vip

{\it Step 4.} For all $C_{k} \in \cQ_k$, we have $m_{C_k} \leq \delta_k^p$ by \eqref{ttc} and since 
$x,y\in C_k$ implies
that $|x-y| \leq \delta_k$. Hence, by definition of $\rho_F$,
$$
\sum_{C_{k}\in\cQ_{k}} \Big(\prod_{j=1}^{k}\rho_{f_j(C_{k})}\Big)m_{C_{k}}
\leq \delta_k^p \sum_{C_{k}\in\cQ_{k}} \prod_{j=1}^{k}\frac{\mu(f_j(C_{k}))}{\mu(f_{j-1}(C_{k}))}
=\delta_k^p \sum_{C_{k}\in\cQ_{k}}\mu(C_{k})=\delta_k^p.
$$
This, inserted in \eqref{rectd} with $i=k-1$, tells us that
$$
m_{G_0}\leq \delta_0^p q_{G_0} +\sum_{\ell=1}^{k-1} \delta_\ell^{p}\sum_{C_\ell\in \cQ_\ell} \Big(\prod_{j=1}^\ell\rho_{f_j(C_\ell)}
\Big)q_{C_\ell}+ \delta_k^p.
$$
Since $\cT_p(\mu,\nu)\leq m_{G_0}$ by Step 2, we conclude that
$$
\cT_p(\mu,\nu) \leq  \sum_{\ell=0}^{k-1} \delta_\ell^{p}u_\ell(\mu,\nu) + \delta_k^p,
$$
where 
$$
u_0(\mu,\nu)=q_{G_0} \qquad \hbox{and} \qquad 
u_\ell(\mu,\nu)=\sum_{C_\ell\in \cQ_\ell} (\prod_{j=1}^\ell\rho_{f_j(C_\ell)})q_{C_\ell} 
\quad\hbox{for}\quad \ell\in \{1,\dots,k-1\}.
$$

{\it Step 5.} We now check by induction that for all $n=1,\dots,k$,  
\begin{align}\label{find}
\sum_{\ell=0}^{n-1} u_\ell(\mu,\nu)=1- \sum_{C_n \in \cQ_n} \prod_{j=1}^n\rho_{f_j(C_n)},
\end{align}
and this will imply \eqref{sul}. If first $n=1$, by \eqref{ano},
$$
u_0(\mu,\nu)=q_{G_0}=\sum_{C_1\in\cQ_1}(\mu(C_1)-\nu(C_1))_+=1-\sum_{C_1\in\cQ_1}[\mu(C_1)\land\nu(C_1)]
=1-\sum_{C_1\in\cQ_1} \rho_{C_1}
$$
as desired. If next \eqref{find} holds with some $n\in \{1,k-1\}$, we write
$$
\sum_{\ell=0}^{n} u_\ell(\mu,\nu)=1- \sum_{C_n \in \cQ_n} \prod_{j=1}^n\rho_{f_j(C_n)}+
\sum_{C_n\in \cQ_n} \Big(\prod_{j=1}^n\rho_{f_j(C_n)}\Big)q_{C_n}
=1- \sum_{C_{n+1} \in \cQ_{n+1}} \prod_{j=1}^{n+1}\rho_{f_j(C_{n+1})},
$$
because, recalling \eqref{ano} and 
that  $\rho_{C_{n+1}}=\frac{\mu(C_{n+1})}{\mu(C_n)}\land\frac{\nu(C_{n+1})}{\nu(C_n)}$
(if $C_{n+1}$ is a child of $C_n$),
\begin{align*}
q_{C_n}=&\sum_{C_{n+1} \!\hbox{\tiny{ child of }}\! C_n}
\Big(\frac{\mu(C_{n+1})}{\mu(C_n)}-\frac{\nu(C_{n+1})}{\nu(C_n)} \Big)_+
=1 - \sum_{C_{n+1} \!\hbox{\tiny{ child of }}\! C_n} \rho_{C_{n+1}}.
\end{align*}

\vip

{\it Step 6.} It only remains to verify \eqref{ul}. But for $\ell=1,\dots,k-1$, by definition \eqref{ano0}
of $q_{C_\ell}$ and 
since
$\prod_{j=1}^\ell\rho_{f_j(C_\ell)} \leq\mu(C_\ell)\land\nu(C_\ell)$,
$$
u_{\ell}(\mu,\nu)=\sum_{C_\ell\in \cQ_\ell} \Big(\prod_{j=1}^\ell\rho_{f_j(C_\ell)}\Big)q_{C_\ell}\leq 
\frac12 \sum_{C_\ell\in \cQ_\ell} (\mu(C_\ell)\land\nu(C_\ell)) \sum_{C_{\ell+1} \!\hbox{\tiny{ child of }}\! C_\ell}
\Big|\frac{\mu(C_{\ell+1})}{\mu(C_\ell)}-\frac{\nu(C_{\ell+1})}{\nu(C_\ell)} \Big|.
$$
Hence we have \eqref{ul} for any $\ell=1,\dots,k-1$. Next, since $\cQ_0=\{G_0\}$
and $\mu,\nu$ are carried by $G_0$,
$$
u_0(\mu,\nu)=q_{C_0}=\frac12 \sum_{C_1\in \cQ_1}|\mu(C_1)-\nu(C_1)|=\frac12\sum_{C_0\in\cQ_0} (\mu(C_0)\land\nu(C_0))
\sum_{C_1\in \cQ_1}\Big|\frac{\mu(C_1)}{\mu(C_0)}-\frac{\nu(C_1)}{\nu(C_0)}\Big|,
$$
whence \eqref{ul} with $\ell=0$.
\end{proof}

We next consider the non compact case.

\begin{lem}\label{l2}
For any $\mu,\nu \in \cP(\rr^d)$, any $a>1$, any $p>0$,
\begin{align}\label{ccc}
\cT_p(\mu,\nu)\leq \sum_{n\geq 0}a^{pn}\Big(2^p \e_p|\mu(G_n^a)-\nu(G_n^a)|
+(\mu(G_n^a)\land \nu(G_n^a))\cT_p(\cR_n^a\mu,\cR_n^a\nu)\Big),
\end{align}
where $\cR_n^a\mu$ is the image measure of $\frac{\mu|_{G_n^a}}{\mu(G_n^a)}$ by the map
$x \mapsto a^{-n}x$.
\end{lem}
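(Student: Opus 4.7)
The plan is to build an explicit coupling $\xi\in\cH(\mu,\nu)$ as the sum of an \emph{in-annulus} part $\xi^{(1)}$ that matches as much mass as possible within each $G_n^a$ via an optimal transport there, and a \emph{cross-annulus} part $\xi^{(2)}$ that absorbs the residual mass imbalance between $\mu(G_n^a)$ and $\nu(G_n^a)$. The scale invariance of the transport cost will produce the $\cT_p(\cR_n^a\mu,\cR_n^a\nu)$ term in \eqref{ccc}, while the triangle inequality together with $(a+b)^p\leq 2^p\e_p(a^p+b^p)$ will produce the $|\mu(G_n^a)-\nu(G_n^a)|$ term.

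For the in-annulus part, I would set $m_n=\mu(G_n^a)$, $m_n'=\nu(G_n^a)$, $\alpha_n=m_n\land m_n'$, and for each $n$ with $\alpha_n>0$ pick an optimal coupling $\xi_n\in\cH(\mu|_{G_n^a}/m_n,\nu|_{G_n^a}/m_n')$ for the cost $|x-y|^p$. Pushing $\xi_n$ forward by $(x,y)\mapsto(a^{-n}x,a^{-n}y)$ yields an element of $\cH(\cR_n^a\mu,\cR_n^a\nu)$, so a change of variables gives
$$
\int_{\rd\times\rd}|x-y|^p\xi_n(\dd x,\dd y)=a^{pn}\cT_p(\cR_n^a\mu,\cR_n^a\nu).
$$
Defining $\xi^{(1)}:=\sum_{n:\alpha_n>0}\alpha_n\xi_n$ thus produces the contribution $\sum_n\alpha_n a^{pn}\cT_p(\cR_n^a\mu,\cR_n^a\nu)$, which is precisely the second summand on the right-hand side of \eqref{ccc}.

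For the cross-annulus part, the residual measures $\mu^{\mathrm{res}}:=\mu-\sum_n\alpha_n\mu|_{G_n^a}/m_n$ and $\nu^{\mathrm{res}}:=\nu-\sum_n\alpha_n\nu|_{G_n^a}/m_n'$ are nonnegative with common total mass $\frac12\sum_n|m_n-m_n'|$, carrying respectively $(m_n-m_n')_+$ and $(m_n'-m_n)_+$ on $G_n^a$. Taking $\xi^{(2)}$ to be any nonnegative measure on $\rd\times\rd$ with these two marginals (for instance $\mu^{\mathrm{res}}\otimes\nu^{\mathrm{res}}$ rescaled by the common mass), the sum $\xi:=\xi^{(1)}+\xi^{(2)}$ lies in $\cH(\mu,\nu)$. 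Since $|x|<a^n$ on $G_n^a$, the triangle inequality followed by $|x-y|^p\leq 2^p\e_p(|x|^p+|y|^p)$ gives
$$
\int_{\rd\times\rd}|x-y|^p\xi^{(2)}(\dd x,\dd y)\leq 2^p\e_p\sum_n a^{pn}\bigl[(m_n-m_n')_++(m_n'-m_n)_+\bigr]=2^p\e_p\sum_n a^{pn}|m_n-m_n'|,
$$
recovering the first summand on the right-hand side of \eqref{ccc}.

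The only substantive step is the elementary inequality $(a+b)^p\leq 2^p\e_p(a^p+b^p)$ for $a,b\geq 0$, which is exactly what forces the definition $\e_p=\max\{1/2,2^{-p}\}$: convexity of $t\mapsto t^p$ gives $(a+b)^p\leq 2^{p-1}(a^p+b^p)$ for $p\geq 1$, while subadditivity gives $(a+b)^p\leq a^p+b^p$ for $p\in(0,1)$. Everything else is bookkeeping, with only mild care needed to skip the annuli on which $m_n$ or $m_n'$ vanishes.
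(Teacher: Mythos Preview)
Your proof is correct and follows essentially the same strategy as the paper: build the coupling $\xi=\xi^{(1)}+\xi^{(2)}$ where $\xi^{(1)}=\sum_n(\mu(G_n^a)\land\nu(G_n^a))\xi_n$ uses optimal in-annulus couplings and $\xi^{(2)}$ is the product of the (normalized) residual measures, then bound the cross term via $|x-y|^p\leq c_p(|x|^p+|y|^p)$ with $c_p=2^p\e_p$ and $|x|<a^n$ on $G_n^a$. The only cosmetic difference is that the paper starts from the optimal $\pi_n\in\cH(\cR_n^a\mu,\cR_n^a\nu)$ and scales up, whereas you start from the optimal $\xi_n$ on $G_n^a$ and scale down; the scaling bijection between couplings makes the two equivalent.
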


\begin{proof}
We fix $a>1$ and $p>0$ and consider, for each $n\geq 0$, the optimal coupling $\pi_n$
between $\cR_{n}^a\mu$ and $\cR_{n}^a\nu$ for $\cT_p$. We define $\xi_n$
as the image of $\pi_n$ by the map $(x,y)\mapsto (a^nx,a^ny)$. It holds that $\xi_n$ belongs
to $\cH(\mu\vert_{G_n^a}/\mu(G_n^a),\nu\vert_{G_n^a}/\nu(G_n^a))$
and satisfies 
\begin{align}\label{jab2}
\int_{\rr^d\times\rr^d} |x-y|^p \xi_n(\dd x,\dd y) = a^{pn} \int_{\rr^d\times\rr^d} |x-y|^p \pi_n(\dd x,\dd y)
=a^{pn} \cT_p(\cR_{n}^a\mu,\cR_{n}^a\nu).
\end{align}
Next, we introduce $q=\frac12 \sum_{n\geq 0} |\mu(G_n^a)-\nu(G_n^a)|$
and we define
\begin{align}\label{jab1}
\xi(\dd x,\dd y)=\sum_{n\geq 0} (\mu(G_n^a)\land\nu(G_n^a))\xi_n(\dd x,\dd y) + q\alpha(\dd x)\beta(\dd y),
\end{align}
where
$$
\alpha(\dd x)= \frac1q\sum_{n\geq 0}(\mu(G_n^a)-\nu(G_n^a))_+ \frac{\mu\vert_{G_n^a}(\dd x)}{\mu(G_n^a)} \quad \hbox{and} \quad
\beta(\dd y)= \frac1q\sum_{n\geq 0}(\nu(G_n^a)-\mu(G_n^a))_+ \frac{\nu\vert_{G_n^a}(\dd y)}{\nu(G_n^a)}.
$$
Using that $(G_n^a)_{n\geq 0}$ is a partition of $\rd$, that
$\xi_n\in\cH(\mu\vert_{G_n^a}/\mu(G_n^a),\nu\vert_{G_n^a}/\nu(G_n^a))$ and that
$$
q=\sum_{n\geq 0}(\nu(G_n^a)-\mu(G_n^a))_+=\sum_{n\geq 0}(\mu(G_n^a)-\nu(G_n^a))_+
=1-\sum_{n\geq 0} (\nu(G_n^a)\land\mu(G_n^a)),
$$
it is easily checked that $\alpha$ and $\beta$ are probability measures and that $\xi \in \cH(\mu,\nu)$.
Furthermore, setting $c_p=1\lor 2^{p-1}$,
\begin{align*}
\int_{\rr^d\times\rr^d} \!|x-y|^p \alpha(\dd x)\beta(\dd y)
\!\leq& c_p\int_{\rr^d\times\rr^d} \!(|x|^p +|y|^p)
\alpha(\dd x)\beta(\dd y)\!=c_p\!\int_{\rr^d}\! |x|^p \alpha(\dd x) +  c_p \!\int_{\rr^d}\! |y|^p \beta(\dd y).
\end{align*}
We have $|x|< a^n$ for all $x \in G_n^a$, whence
\begin{align}\label{jab3}
q\int_{\rr^d\times\rr^d}|x-y|^p \alpha(\dd x)\beta(\dd y)\leq & c_p
\sum_{n\geq 0} a^{pn} [(\mu(G_n^a)-\nu(G_n^a))_+ +(\nu(G_n^a)-\mu(G_n^a))_+ ] \notag\\
=& 2^p \e_p  \sum_{n\geq 0} a^{pn} |\mu(G_n^a)-\nu(G_n^a)|.
\end{align}
Using that $\cT_p(\mu,\nu)\leq \int_{\rd\times\rd} |x-y|^p \xi(\dd x,\dd y)$ 
and \eqref{jab1}-\eqref{jab2}-\eqref{jab3} 
completes the proof.
\end{proof}

We can now give the

\begin{proof}[Proof of Proposition \ref{depart}]
Fix $\mu$ and $\nu$ in $\cP(\rd)$ and $a>1$.
For each $n\geq 0$, the probability measures $\cR_n^a\mu$ and $\cR_n^a\nu$, defined in Lemma \ref{l2}, 
are supported in $B(0,1)$,
and $\cR_n^a\mu(C)=\frac{\mu(a^nC \cap G_n^a)}{\mu(G_n^a)}$ for all $C \in \cB(\rr^d)$.
Hence we know from Lemma \ref{cc} that there exists some numbers $r_{a,n,\ell}(\mu,\nu)=u_\ell(\cR_n^a\mu,\cR_n^a\nu)$
satisfying $\sum_{\ell=0}^{k-1} r_{a,n,\ell}(\mu,\nu)\leq 1$ and 
$$
r_{a,n,\ell}(\mu,\nu) \leq \frac12 \sum_{F\in \cQ_\ell}
\Big(\frac{\mu(a^nF \cap G_n^a)}{\mu(G_n^a)} \land \frac{\nu(a^nF \cap G_n^a)}{\nu(G_n^a)} \Big)
\sum_{C\!\hbox{\tiny{ child of }}\! F} \Big|\frac{\mu(a^nC \cap G_n^a)}{\mu(a^nF\cap G_n^a)} 
- \frac{\nu(a^nC \cap G_n^a)}{\nu(a^nF\cap G_n^a)}\Big|
$$
and such that
$$
\cT_p(\cR_n^a\mu,\cR_n^a\nu) \leq  \delta_k^p+\sum_{\ell=0}^{k-1} \delta_\ell^p r_{a,n,\ell}(\mu,\nu).
$$
Inserting this into \eqref{ccc} completes the proof.
\end{proof}

\section{A general estimate concerning the empirical measure}\label{gume}

To go further, we need a more precise setting.

\begin{set}\label{ssm}
Same points (a) and (b) as in Setting \ref{sss}.
\vip
(c) There are some constants $A,D>0$ and $r>1$ such that for each $k\geq 1$,
there is a family $(\cQ_{k,\ell})_{\ell=0,\dots,k}$ of nested partitions of $G_0$ such that $\cQ_{k,0}=\{G_0\}$
and such that 
\begin{gather}\label{card}
\forall\; \ell=1,\dots,k,\qquad |\cQ_{k,\ell}|\leq A \,r^{d\ell}
\end{gather}
and
\begin{gather}\label{card2}
\forall\; \ell=0,\dots,k,\qquad \delta_{k,\ell} = \max_{C \in \cQ_{k,\ell}} \sup_{x,y \in C} |x-y| \leq D r^{-\ell}.
\end{gather}
\end{set}
Recall that $\e_p=2^{-1}\lor 2^{-p}$.
The goal of this section is to prove the following result.

\begin{prop}\label{ccco}
We adopt Setting \ref{ssm}, consider $\mu \in \cP(\rd)$ and
the associated empirical measure $\mu_N$, see \eqref{muN}. 
Fix $p>0$ and assume that $\cM_q(\mu)<\infty$ for some $q>p$.
For all $a>1$, all $N\geq 1$,
$$
\E[\cT_p(\mu_N,\mu)]\leq K_{N}+\min\{L_{N},M_{N}\},
$$
where
\begin{align*}
K_{N}=&2^p\e_p \Big[\Big(2[1-\mu(G_0^a)]\Big)\land\sqrt{\frac{1-\mu(G_0^a)}{N}}\Big]
+ 2^p\e_p\sum_{n\geq 1} a^{pn} \Big[\Big(2\mu(G_n^a)\Big)\land \sqrt{\frac{\mu(G_n^a)}{N}}\Big],\\
L_{N}=& \frac{D^p\sqrt{Ar^d}}{2\sqrt N} \sum_{\ell\geq 0} r^{(d/2-p)\ell}
\sum_{n\geq 0} a^{pn}\sqrt{\mu(G_n^a)},\\
M_{N}=&D^p(1-r^{-p}) \sum_{n\geq 0} a^{pn}\sum_{\ell\geq 0} r^{-p\ell} \Big(\mu(G_n^a) 
\land \Big[\frac{\sqrt A r^{d\ell/2+d}}{2(r^{d/2} -1)}
\sqrt{\frac{\mu(G_n^a)}{N}}\Big] \Big).
\end{align*}
\end{prop}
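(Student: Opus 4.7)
The strategy is to apply Proposition \ref{depart} to the pair $(\mu,\mu_N)$ with the nested partitions provided by Setting \ref{ssm}, take expectations, and estimate each resulting piece using elementary moments of the Binomial distribution. Fix $a>1$. For each $k\geq 1$ Proposition \ref{depart} supplies (random) coefficients $r_{a,n,\ell}(\mu,\mu_N)$ satisfying \eqref{rnl1}--\eqref{rnl2} together with
$$
\cT_p(\mu_N,\mu)\leq \sum_{n\geq 0}a^{pn}\Big(2^p\e_p|\mu(G_n^a)-\mu_N(G_n^a)|+(\mu\land\mu_N)(G_n^a)\Big[D^pr^{-pk}+D^p\sum_{\ell=0}^{k-1}r^{-p\ell}\,r_{a,n,\ell}\Big]\Big),
$$
after inserting the bound $\delta_{k,\ell}^p\leq D^pr^{-p\ell}$ from \eqref{card2}. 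Letting $k\to\infty$ makes the remainder $D^pr^{-pk}$ vanish; the moment hypothesis $\cM_q(\mu)<\infty$ together with \eqref{rnl1} ensures that all series converge absolutely and that $\E$ commutes with $\sum$ by Fubini--Tonelli.

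For the mass-deviation piece, $N\mu_N(G_n^a)$ is Binomial of parameters $(N,\mu(G_n^a))$, so Jensen gives $\E|\mu(G_n^a)-\mu_N(G_n^a)|\leq \sqrt{\mu(G_n^a)(1-\mu(G_n^a))/N}$. I upper-bound this by $\sqrt{\mu(G_n^a)/N}$ for $n\geq 1$ and, passing to the complementary event, by $\sqrt{(1-\mu(G_0^a))/N}$ for $n=0$. The deterministic triangle bound $|\mu_N(G_n^a)-\mu(G_n^a)|\leq\mu_N(G_n^a)+\mu(G_n^a)$ produces in expectation $\leq 2\mu(G_n^a)$ for $n\geq 1$ and $\leq 2(1-\mu(G_0^a))$ for $n=0$. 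Taking the minimum of the two candidate bounds at each $n$ yields exactly $K_N$.

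For the partition piece I derive two independent estimates and keep the smaller one. The first (giving $L_N$) starts from \eqref{rnl2}: I absorb the prefactor $(\mu\land\mu_N)(G_n^a)$ into the inner min via $(X\land Y)\min(x/X,y/Y)\leq x\land y\leq\sqrt{xy}$, reducing the expression to a weighted sum of the unnormalized increments $|\mu(a^nC\cap G_n^a)-\mu_N(a^nC\cap G_n^a)|$ over $C\in\cQ_{k,\ell+1}$. Applying Jensen to each Binomial count and finishing with Cauchy--Schwarz across the at most $|\cQ_{k,\ell+1}|\leq Ar^{d(\ell+1)}$ children gives a control of the form $\tfrac12\sqrt{Ar^{d(\ell+1)}\mu(G_n^a)/N}$ (the $\tfrac12$ being inherited from \eqref{rnl2}); weighting by $D^pr^{-p\ell}$ and summing over $\ell\geq 0$ and $n\geq 0$ produces $L_N$ with its geometric factor $r^{(d/2-p)\ell}$. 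The second estimate (giving $M_N$) supplements the Cauchy--Schwarz bound with the deterministic estimate $(\mu\land\mu_N)(G_n^a)\,r_{a,n,\ell}\leq\mu(G_n^a)$, valid on every $(n,\ell)$ by \eqref{rnl1}, and keeps the smaller of the two candidates for each $(n,\ell)$. The prefactor $(1-r^{-p})$ appears through the telescoping identity $(1-r^{-p})\sum_{\ell\geq 0}r^{-p\ell}=1$, which lets one rewrite the weighted series into the stated form.

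The principal obstacle is the bookkeeping in the $L_N$ step: one has to pass cleanly from the natural quantity $\sum_{F\in\cQ_{k,\ell}}(\mu\land\mu_N)_F\sum_{C\text{ child of }F}|\mu_C/\mu_F-\mu_{N,C}/\mu_{N,F}|$ supplied by \eqref{rnl2} down to the simpler Cauchy--Schwarz bound involving $\sum_{C\in\cQ_{k,\ell+1}}\sqrt{\mu_C/N}$, without spoiling the $\tfrac12$; and one has to check that the two available candidate bounds on $\E[(\mu\land\mu_N)(G_n^a)r_{a,n,\ell}]$ may indeed be taken as a term-wise minimum inside the $(n,\ell)$ double sum, yielding $M_N$ in addition to the "flat" Cauchy--Schwarz estimate $L_N$.
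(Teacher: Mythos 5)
Your overall architecture (Proposition \ref{depart} applied to $(\mu,\mu_N)$, expectations, Binomial estimates for $K_N$, two alternative controls of the partition piece, then $k\to\infty$) is the same as the paper's, and your treatment of $K_N$ and of the limit in $k$ is fine. But each of the two refined estimates contains a genuine gap, and in both cases it is precisely the point you flag as "the principal obstacle" that is left unresolved.

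For $L_N$: after bounding $(\mu(G_n^a)\land\mu_N(G_n^a))\bigl(\tfrac{\mu(a^nF\cap G_n^a)}{\mu(G_n^a)}\land\tfrac{\mu_N(a^nF\cap G_n^a)}{\mu_N(G_n^a)}\bigr)$ by $\mu_F\land\mu_{N,F}$ (shorthand for the traces on $G_n^a$), the quantity to control is $(\mu_F\land\mu_{N,F})\,\bigl|\mu_C/\mu_F-\mu_{N,C}/\mu_{N,F}\bigr|$. This is \emph{not} dominated by the unnormalized increment $|\mu_C-\mu_{N,C}|$: take $\mu_C=\mu_{N,C}$ with $\mu_F\neq\mu_{N,F}$, and the left side is positive while the right side vanishes. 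The generic reduction produces an extra term $\tfrac{\mu_{N,C}}{\mu_{N,F}}|\mu_F-\mu_{N,F}|$, hence up to a factor $2$ after summing over the children of $F$, so the $\tfrac12$ is not ``inherited''. The paper's way out is different: it bounds the prefactor by $\mu_{N,F}$ alone (one term of the min, not the min), so the summand becomes $\E\bigl|\mu_{N,C}-\mu_{N,F}\mu_C/\mu_F\bigr|$; conditionally on $N\mu_{N,F}=i$, the count $N\mu_{N,C}$ is Binomial$(i,\mu_C/\mu_F)$, giving a conditional bound $\sqrt{i\mu_C/(N^2\mu_F)}$, and then $\E\sqrt{N\mu_{N,F}}\le\sqrt{N\mu_F}$ yields $\sqrt{\mu_C/N}$; Cauchy--Schwarz over $\cQ_{k,\ell+1}$ then gives exactly $\tfrac12\sqrt{|\cQ_{k,\ell+1}|\mu(G_n^a)/N}$ with the $\tfrac12$ intact.

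For $M_N$: a term-wise minimum cannot produce the factor $(1-r^{-p})$. If for every $\ell$ in some tail you use $\rho_{k,a,n,\ell}\le\mu(G_n^a)$, the resulting contribution is $\sum_\ell r^{-p\ell}\mu(G_n^a)=\mu(G_n^a)/(1-r^{-p})$, strictly larger than the corresponding part of $M_N$; the identity $(1-r^{-p})\sum_{\ell\ge0}r^{-p\ell}=1$ does not allow you to reinsert the factor afterwards. What the paper actually uses is that \eqref{rnl1} controls the \emph{partial sums} $S_\ell=\sum_{i\le\ell}\rho_{k,a,n,i}\le\mu(G_n^a)$ (together with $S_\ell\le\tfrac{\sqrt A r^{d\ell/2+d}}{2(r^{d/2}-1)}\sqrt{\mu(G_n^a)/N}$ obtained by accumulating the Cauchy--Schwarz bounds), followed by the Abel summation $\sum_{\ell=0}^{k-1} r^{-p\ell}\rho_{k,a,n,\ell}=(1-r^{-p})\sum_{\ell=0}^{k-1} r^{-p\ell}S_\ell+r^{-pk}S_{k-1}$. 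This is where both the prefactor $(1-r^{-p})$ and the constant $r^{d/2}/(r^{d/2}-1)$ inside the min come from; the author explicitly notes that this discrete integration by parts is crucial for the constants when $p\in(0,d/2)$, so it cannot be replaced by the term-wise argument you describe.
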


We simply write $K_N,L_N,M_N$ for readability, but these quantities also depend on $p$, $a$ and $\mu$.

\begin{proof} 
We fix $\mu\in\cP(\rr^d)$, $a>1$ and $p>0$.
We also fix $k\geq 1$; we will let $k\to \infty$ at the end of the proof.
Applying Proposition \ref{depart}, with the family $(\cQ_{k,\ell})_{\ell=0,\dots,k}$, 
with $\mu$ and $\nu=\mu_N$ and taking expectations, we find
\begin{align}\label{rr1}
\E[\cT_p(\mu_N,\mu)]\leq U_{N,k}+V_{N,k}+W_{N,k}, 
\end{align}
where, setting 
$\rho_{k,a,n,\ell}=\E[(\mu(G_n^a)\land\mu_N(G_n^a))r_{k,a,n,\ell}(\mu,\mu_N)]$
(with $r_{k,a,n,\ell}(\mu,\mu_N)$ as defined in Proposition \ref{depart} with 
the family  $(\cQ_{k,\ell})_{\ell=0,\dots,k}$),
\begin{align}
U_{N,k}=&2^p\e_p \sum_{n\geq 0} a^{pn}\E[|\mu(G_n^a)-\mu_N(G_n^a)|],\notag\\
V_{N,k}=&\sum_{n\geq 0}a^{pn}\sum_{\ell= 0}^{k-1} \delta_{k,\ell}^p\rho_{k,a,n,\ell}\leq D^p
\sum_{n\geq 0}a^{pn}\sum_{\ell=0}^{k-1} r^{-p\ell}\rho_{k,a,n,\ell},\notag\\
W_{N,k}=& \delta_{k,k}^p\sum_{n\geq 0} a^{pn} \mu(G_n^a)\leq D^p r^{-pk} \sum_{n\geq 0} a^{pn}\mu(G_n^a). \label{rr2}
\end{align}
We used that $\delta_{k,\ell}\leq D r^{-\ell}$ for all $\ell\in\{0,\dots,k\}$, see \eqref{card2}.

\vip

Since $N\mu_N(G_n^a)$ is Binomial$(N,\mu(G_n^a))$-distributed, it holds that $\E[\mu_N(G_n^a)]=\mu(G_n^a)$ and 
$\Var [\mu_N(G_n^a)]=N^{-1}\mu(G_n^a)(1-\mu(G_n^a))$, from which we  deduce that
$$
\E[|\mu(G_n^a)-\mu_N(G_n^a)|]\leq \Big(2[1-\mu(G_n^a)]\Big)\land \Big(2\mu(G_n^a)\Big)
\land \sqrt{\frac{\mu(G_n^a)(1-\mu(G_n^a))}{N}}.
$$
We used that $|x-y|=|(1-x)-(1-y)|\leq 1-x+1-y$ for all $x,y \in [0,1]$ for the first bound,
that $|x-y|\leq x+y$ for the second one, and the Bienaym\'e-Tchebychev inequality for the third one.
All this implies that for all $k\geq 1$,
\begin{align}\label{rr3}
U_{N,k}\leq K_{N}.
\end{align}

Next, we observe that $\sum_{\ell=0}^{k-1}\rho_{k,a,n,\ell} \leq \mu(G_n^a)$ by \eqref{rnl1} and we claim that
for $\ell=0,\dots,k-1$,
\begin{align}\label{tp}
\rho_{k,a,n,\ell} \leq \frac12\sqrt{\frac{|\cQ_{k,\ell+1}|\mu(G_n^a)}{N}}.
\end{align}
Recalling \eqref{rnl2} and using that $(\mu(G_n^a)\land\mu_N(G_n^a))(\frac{\mu(a^nF\cap G_n^a)}{\mu(G_n^a)}\land
\frac{\mu_N(a^nF\cap G_n^a)}{\mu_N(G_n^a)}) \leq \mu_N(a^nF\cap G_n^a)$,
\begin{align}\label{tbi}
\rho_{k,a,n,\ell}\leq \frac12\sum_{F\in \cQ_{k,\ell}} \hskip3pt\sum_{C \!\hbox{\tiny{ child of }}\! F}
\E\Big[\Big|\mu_N(a^nC \cap G_n^a)\!-\frac{\mu_N(a^nF \cap G_n^a)\mu(a^nC \cap G_n^a)}{\mu(a^nF \cap G_n^a)}\Big|\Big].
\end{align}
But for $C$ a child of $F$, the 
conditional law
of $N\mu_N(a^nC \cap G_n^a)$ knowing that $N\mu_N(a^nF \cap G_n^a)=i$ is Binomial$(i,\frac{\mu(a^nC \cap G_n^a)}
{\mu(a^nF \cap G_n^a)})$, whence
$$
\E\Big[\Big|\mu_N(a^nC \cap G_n^a)-\frac{\mu_N(a^nF \cap G_n^a)\mu(a^nC \cap G_n^a)}{\mu(a^nF \cap G_n^a)}\Big|
\hskip3pt\Big| N\mu_N(a^nF \cap G_n^a)=i\Big] \leq \sqrt{\frac i {N^2} 
\frac{\mu(a^nC \cap G_n^a)}{\mu(a^nF \cap G_n^a)}.}
$$
Hence, since $\E[\sqrt{N\mu_N(A)}]\leq \sqrt{N\mu(A)}$ because $\E[\mu_N(A)]=\E[\mu(A)]$,
\begin{align*}
\E\Big[\Big|\mu_N(a^nC \cap G_n^a)\!-\frac{\mu_N(a^nF \cap G_n^a)\mu(a^nC \cap G_n^a)}{\mu(a^nF \cap G_n^a)}\Big|
\Big] \!\leq& \sqrt{\frac{\mu(a^nC \cap G_n^a)}{N^2\mu(a^nF \cap G_n^a)}} \E\Big[ \sqrt{N\mu_N(a^nF \cap G_n^a)}\Big]\\
\leq &\sqrt{\frac{\mu(a^nC \cap G_n^a)}{N}}.
\end{align*}
This, inserted in \eqref{tbi}, proves the claim \eqref{tp}, since by the Cauchy-Schwarz inequality,
$$
\sum_{F\in \cQ_{k,\ell}} \hskip3pt\sum_{C \!\hbox{\tiny{ child of }}\! F} \sqrt{\frac{\mu(a^nC \cap G_n^a)}{N}}
=\sum_{C \in \cQ_{k,\ell+1}} \sqrt{\frac{\mu(a^nC \cap G_n^a)}{N}} \leq \sqrt{\frac{|\cQ_{k,\ell+1}|\mu(G_n^a)}{N}}.
$$

We deduce from \eqref{tp} and \eqref{card} that
$$
\sum_{\ell=0}^{k-1} r^{-p\ell} \rho_{k,a,n,\ell}
\leq \frac12\sum_{\ell=0}^{k-1} r^{-p\ell}\sqrt{\frac{|\cQ_{k,\ell+1}|\mu(G_n^a)}{N}}
\leq\! \frac{\sqrt {A\mu(G_n^a)}}{2\sqrt{N}} \sum_{\ell=0}^{k-1}r^{-p\ell}r^{d(\ell+1)/2}.
$$
Since $V_{N,k}\leq D^p\sum_{n\geq 0}a^{pn}\sum_{\ell=0}^{k-1}r^{-p\ell} \rho_{k,a,n,\ell}$, we conclude that 
for all $k\geq 1$,
\begin{align}\label{rr4}
V_{N,k} \leq 
D^p\sum_{n\geq 0}a^{pn} \frac{\sqrt {A\mu(G_n^a)}}{2\sqrt{N}} \sum_{\ell\geq0}r^{-p\ell}r^{d(\ell+1)/2}
=L_{N}
\end{align}

Next, we set $S_{k,a,n,\ell}=\sum_{i=0}^\ell \rho_{k,a,n,i}$ for $\ell=0,\dots,k-1$ and $S_{k,a,n,-1}=0$ to write
$$
\sum_{\ell=0}^{k-1} r^{-p\ell}\rho_{k,a,n,\ell}=
\sum_{\ell=0}^{k-1} r^{-p\ell}(S_{k,a,n,\ell}-S_{k,a,n,\ell-1})
=(1-r^{-p})\sum_{\ell=0}^{k-1}r^{-p\ell}S_{k,a,n,\ell}+r^{-pk}S_{k,a,n,k-1}.
$$
But for each $\ell=0,\dots,k-1$, we both have $S_{k,a,n,\ell} \leq \mu(G_n^a)$ 
(since $\sum_{\ell=0}^{k-1}\rho_{k,a,n,\ell} \leq \mu(G_n^a)$, as already seen) and, by \eqref{tp} and \eqref{card},
$$
S_{k,a,n,\ell} \leq \frac 12 \sum_{i=0}^\ell \sqrt{\frac{|\cQ_{k,i+1}|\mu(G_n^a)}{N}}
\leq \frac{\sqrt{A \mu(G_n^a)}}{2\sqrt{N}} \sum_{i=0}^\ell r^{d(i+1)/2}\leq
\frac{\sqrt A r^{d\ell/2+d}}{2(r^{d/2} -1)}
\sqrt{\frac{\mu(G_n^a)}{N}}.
$$
Hence  
$$
\sum_{\ell=0}^{k-1} r^{-p\ell}\rho_{k,a,n,\ell}
\leq (1-r^{-p}) \sum_{\ell=0}^{k-1} r^{-p\ell} \Big(\mu(G_n^a) 
\land \Big[\frac{\sqrt A r^{d\ell/2+d}}{2(r^{d/2} -1)}
\sqrt{\frac{\mu(G_n^a)}{N}}\Big] \Big)+r^{-pk}\mu(G_n^a).
$$
Recalling that  $V_{N,k}\leq D^p\sum_{n\geq 0}a^{pn}\sum_{\ell=0}^{k-1}r^{-p\ell} \rho_{k,a,n,\ell}$, we
conclude that for all $k\geq 1$, it holds that
\begin{align}
V_{N,k} \leq & D^p\sum_{n\geq 0}a^{pn}(1-r^{-p}) \sum_{\ell \geq 0} r^{-p\ell} \Big(\mu(G_n^a) 
\land \Big[\frac{\sqrt A r^{d\ell/2+d}}{2(r^{d/2} -1)}
\sqrt{\frac{\mu(G_n^a)}{N}}\Big] \Big) + D^pr^{-pk}\sum_{n\geq 0} a^{pn}\mu(G_n^a)   \notag\\
=&M_{N} + D^pr^{-pk}\sum_{n\geq 0} a^{pn}\mu(G_n^a).\label{rr5}
\end{align}

Gathering \eqref{rr1}-\eqref{rr2}-\eqref{rr3}-\eqref{rr4}-\eqref{rr5}, we have proved that for all $k\geq 1$,
\begin{equation}\label{tti}
\E[\cT_p(\mu_N,\mu)]\leq K_N+\min\Big\{L_N,M_N+ D^pr^{-pk}\sum_{n\geq 0} a^{pn}\mu(G_n^a)\Big\}
+ D^pr^{-pk}\sum_{n\geq 0} a^{pn}\mu(G_n^a).
\end{equation}
Since $\mu(G_n^a)\leq \cM_q(\mu) a^{(1-n)q}$ for all $n\geq 1$
because $G_n^a \subset B(0,a^{n-1})^c$, and since $q>p$, we deduce that 
$\sum_{n\geq 0} a^{pn}\mu(G_n^a)<\infty$.
Letting $k\to \infty$ in \eqref{tti} thus completes the proof.
\end{proof}

Let us mention that the penultimate paragraph of this proof, where we handle a discrete integration
by parts, is crucial to obtain reasonable constants when $p\in(0,d/2)$.

\section{Precise study of some series}\label{series}

\begin{lem}\label{phi}
Fix $r>1$, $\beta\geq \alpha>0$ and $x \geq 0$ and put
$$
\Psi_{r,\alpha,\beta}(x)= \sum_{\ell \geq 0} r^{-\alpha \ell} [1 \land (x \;r^{\beta \ell})].
$$ 
With the notation $\log_+ x= (\log x)\lor 0$, it holds that
\begin{align}
\Psi_{r,\alpha,\beta}(x) \leq& \Big(\frac{\log_+ (1/x)}{\beta\log r} + \frac{1}{1-r^{-\alpha}}\Big)x
&\hbox{if $\beta=\alpha$,}\label{tt2}\\
\Psi_{r,\alpha,\beta}(x) \leq & 
\Big(\frac 1 {r^{\beta-\alpha}-1} + \frac 1{1-r^{-\alpha}} \Big) x^{\alpha/\beta}
 &\hbox{if $\beta>\alpha$.}\label{tt3}
\end{align}
\end{lem}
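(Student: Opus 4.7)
The plan is to identify the threshold $L:=\log_+(1/x)/(\beta\log r)$ at which the quantity $xr^{\beta\ell}$ crosses $1$, and to split the series according to whether $\ell$ lies below or above $L$: for $\ell\leq L$ the minimum equals $xr^{\beta\ell}$, while for $\ell>L$ it equals $1$. First I would dispose of the trivial case $x\geq 1$: then $xr^{\beta\ell}\geq 1$ for every $\ell\geq 0$, so $\Psi_{r,\alpha,\beta}(x)=1/(1-r^{-\alpha})$, which is dominated by the right-hand side of each of \eqref{tt2} and \eqref{tt3}. From now on I assume $0<x<1$, and set $\ell_0=\lfloor L\rfloor$, $M=\ell_0+1$, and $\theta=M-L\in(0,1]$. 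The key identity $r^{-\beta L}=x$ yields $r^{-\alpha L}=x^{\alpha/\beta}$ and will let every partial sum be written in terms of $\theta$ and $x^{\alpha/\beta}$.

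For \eqref{tt2} ($\beta=\alpha$), splitting the series at $\ell_0$ gives
$$
\Psi_{r,\alpha,\beta}(x)=\sum_{\ell=0}^{\ell_0}x+\sum_{\ell\geq M}r^{-\alpha\ell}=xM+\frac{r^{-\alpha M}}{1-r^{-\alpha}}=xL+x\theta+\frac{xr^{-\alpha\theta}}{1-r^{-\alpha}},
$$
so it remains to verify the scalar inequality $\theta(1-r^{-\alpha})\leq 1-r^{-\alpha\theta}$ for $\theta\in(0,1]$. This will follow from concavity of $u\mapsto 1-r^{-\alpha u}$ on $[0,1]$, which lies above its secant joining $(0,0)$ to $(1,1-r^{-\alpha})$.

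For \eqref{tt3} ($\beta>\alpha$), the geometric head and tail give
$$
\Psi_{r,\alpha,\beta}(x)=\frac{xr^{(\beta-\alpha)M}-x}{r^{\beta-\alpha}-1}+\frac{r^{-\alpha M}}{1-r^{-\alpha}},
$$
and after substituting $xr^{(\beta-\alpha)M}=x^{\alpha/\beta}r^{(\beta-\alpha)\theta}$ and $r^{-\alpha M}=x^{\alpha/\beta}r^{-\alpha\theta}$, then dividing by $x^{\alpha/\beta}$, the desired bound reduces to
$$
\frac{r^{(\beta-\alpha)\theta}-1}{r^{\beta-\alpha}-1}\leq \frac{1-r^{-\alpha\theta}}{1-r^{-\alpha}}+\frac{x^{1-\alpha/\beta}}{r^{\beta-\alpha}-1}.
$$
Since the last term is nonnegative, it suffices to establish the stronger inequality obtained by erasing it, and this will follow from the sandwich
$$
\frac{r^{(\beta-\alpha)\theta}-1}{r^{\beta-\alpha}-1}\leq \theta \leq \frac{1-r^{-\alpha\theta}}{1-r^{-\alpha}},
$$
where the left inequality uses convexity of $u\mapsto r^{(\beta-\alpha)u}$ on $[0,1]$ (it lies below its secant) and the right one uses concavity of $u\mapsto 1-r^{-\alpha u}$ as above. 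The main obstacle is retaining the negative correction $-x/(r^{\beta-\alpha}-1)$ coming from the finite geometric sum throughout the $\beta>\alpha$ case: naively discarding it replaces the optimal constant $1/(r^{\beta-\alpha}-1)$ by the larger $r^{\beta-\alpha}/(r^{\beta-\alpha}-1)$ in the final bound, which is precisely why the sharper concavity/convexity sandwich, rather than a crude geometric majorization, is needed.
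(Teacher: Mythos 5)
Your proof is correct and follows essentially the same route as the paper: split the sum at $\lfloor \log(1/x)/(\beta\log r)\rfloor$, rewrite what remains in terms of the fractional part $\theta\in(0,1]$ via $r^{-\beta L}=x$, and close with a secant/convexity argument on $[0,1]$. The only cosmetic difference is that you verify the key scalar inequality by sandwiching $\theta$ between $\frac{r^{(\beta-\alpha)\theta}-1}{r^{\beta-\alpha}-1}$ and $\frac{1-r^{-\alpha\theta}}{1-r^{-\alpha}}$, whereas the paper shows directly that their difference $g$ is convex with $g(0)=g(1)=0$; these are equivalent, and (contrary to your closing remark) the paper also discards the $-x/(r^{\beta-\alpha}-1)$ correction at the outset without losing the constant, since the sharpness comes from the endpoint analysis in $\theta$, not from that term.
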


\begin{proof}
We fix $\alpha=\beta>0$ and prove \eqref{tt2}. 
If $x>1$, we write 
$$
\Psi_{r,\alpha,\beta}(x)\leq \sum_{\ell \geq 0} r^{-\alpha \ell}= \frac 1{1-r^{-\alpha}}\leq \frac x{1-r^{-\alpha}}.
$$
If $x\in [0,1]$, we set $t_x= \log (1/x) / (\beta \log r) \geq 0$, $\ell_x=\lfloor t_x \rfloor\in\nn$ and 
$s_x=t_x-\ell_x\in [0,1)$ and write
$$
\Psi_{r,\alpha,\beta}(x)\leq \sum_{\ell=0}^{\ell_x} x + \sum_{\ell=\ell_x+1}^{\infty} r^{-\alpha\ell}
=x(\ell_x+1)+\frac{r^{-\alpha(\ell_x+1)}}{1-r^{-\alpha}}
= u+v,
$$
where $u=x t_x +x/(1-r^{-\alpha})$ is the  desired bound and where, since $x=r^{-\alpha t_x}$,
$$
v=x(\ell_x+1-t_x)+ \frac{r^{-\alpha(\ell_x+1)}-x}{1-r^{-\alpha}}=x
\Big[1-s_x + \frac{r^{-\alpha(1-s_x)}-1}{1-r^{-\alpha}}\Big].
$$
To show that $v\leq 0$, which will complete the proof of \eqref{tt2}, 
it suffices to prove that $g(u)=u+ \frac{r^{-\alpha u}-1}{1-r^{-\alpha}}$
is nonpositive for all $u\in[0,1]$. But
$g''(u)=\frac{(\alpha \log r)^2}{1-r^{-\alpha}}r^{-\alpha u}\geq 0$, so that
$g$ is convex, and $g(0)=g(1)=0$. The conclusion follows.

\vip

We fix $\beta>\alpha>0$ and prove \eqref{tt3}.
If $x>1$, we write 
$$
\Psi_{r,\alpha,\beta}(x)\leq \sum_{\ell \geq 0} r^{-\alpha \ell}= \frac 1{1-r^{-\alpha}}\leq \frac {x^{\alpha/\beta}}{1-r^{-\alpha}}.
$$
If $x\in [0,1]$, we set $t_x= \log (1/x) / (\beta \log r)\geq 0$, $\ell_x=\lfloor t_x \rfloor\in\nn$ and 
$s_x=t_x-\ell_x\in [0,1)$ and write
$$
\Psi_{r,\alpha,\beta}(x)\leq \sum_{\ell=0}^{\ell_x} x r^{(\beta-\alpha)\ell} + \sum_{\ell=\ell_x+1}^{\infty} r^{-\alpha\ell}
\leq x \frac{r^{(\beta-\alpha)(\ell_x+1)}}{r^{\beta-\alpha}-1}+\frac{r^{-\alpha(\ell_x+1)}}{1-r^{-\alpha}}
= u+v,
$$
where $u=x^{\alpha/\beta}/(r^{\beta-\alpha}-1)+x^{\alpha/\beta}/(1-r^{-\alpha})$ is the desired bound and where,
since $x=r^{-\beta t_x}$,
$$
v=\frac{x r^{(\beta-\alpha)(\ell_x+1)}-x^{\alpha/\beta}}{r^{\beta-\alpha}-1}+\frac{r^{-\alpha(\ell_x+1)}-x^{\alpha/\beta}}{1-r^{-\alpha}}
=x^{\alpha/\beta}\Big[\frac{r^{(\beta-\alpha)(1-s_x)}-1}{r^{\beta-\alpha}-1}+\frac{r^{-\alpha(1-s_x)}-1}{1-r^{-\alpha}} \Big].
$$
To show that $v\leq 0$, which will complete the proof of \eqref{tt3}, 
it suffices to show that $g(u)=\frac{r^{(\beta-\alpha)u}-1}{r^{\beta-\alpha}-1}+\frac{r^{-\alpha u}-1}{1-r^{-\alpha}}$
is nonpositive for all $u\in[0,1]$. But $g''(u)=\frac{((\beta-\alpha) \log r)^2}{r^{\beta-\alpha}-1}r^{(\beta-\alpha)u}
+\frac{(\alpha \log r)^2}{1-r^{-\alpha}}r^{-\alpha u} \geq 0$, so that
$g$ is convex, and it holds that $g(0)=g(1)=0$.
\end{proof}

\section{Theoretical result for a general norm}\label{titrenaze}

Recall that $\e_p=2^{-1}\lor 2^{-p}$ and that $H$ was defined in \eqref{H}. 
Here we prove the following general result,
to be applied to some specific norms later.

\begin{prop}\label{genen}
We adopt Setting \ref{ssm}, we fix $\mu \in \cP(\rd)$ and consider the associated empirical 
measure $\mu_N$, see \eqref{muN}. We fix $q>p>0$ and assume that $\cM_q(\mu)<\infty$.
\vip
(i) If $p>d/2$ and $q>2p$, then for all $N\geq 1$,
$$
\E[\cT_p(\mu_N,\mu)]\leq 2^p\frac{\kappa_{d,p}}{\sqrt N}[\cM_q(\mu)]^{p/q}H\Big(\frac{\e_p}{\kappa_{d,p}},2p,q\Big),
$$
where
$$
\kappa_{d,p}=\frac{D^p\sqrt{A r^d}}{2^{p+1}(1-r^{d/2-p})}.
$$

(ii) If $p=d/2$ and $q>2p$, then for all $N\geq 1$,
$$
\E[\cT_p(\mu_N,\mu)]\leq 2^p\frac{\kappa_{d,p,N}}{\sqrt N}[\cM_q(\mu)]^{p/q}H\Big(\frac{\e_p}{\kappa_{d,p,N}},2p,q\Big),
$$
where
$$
\kappa_{d,p,N}=\frac{D^p\sqrt{A} r^{p}}{2^{p+1}p\log r}\log_+\Big(2(r^{-p}-r^{-2p})\sqrt{\frac N A}\Big) 
+\frac{D^p\sqrt{A} r^{2p}}{2^{p+1}(r^p-1)}.
$$

(iii) If $p\in (0,d/2)$ and $q>dp/(d-p)$, then for all $N\geq 1$,
$$
\E[\cT_p(\mu_N,\mu)]\leq 2^p\frac{\kappa_{d,p}}{N^{p/d}}[\cM_q(\mu)]^{p/q}
H\Big(\frac{2^{1-2p/d}\e_p}{\kappa_{d,p}},\frac{dp}{d-p},q\Big),
$$
where
$$
\kappa_{d,p}=\frac{D^pA^{p/d}r^{p}(r^{d/2}-1)^{1-2p/d}}{2^{p+2p/d}(r^{d/2-p}-1)}.
$$
\end{prop}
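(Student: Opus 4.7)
The plan is to start from Proposition~\ref{ccco}, which gives $\E[\cT_p(\mu_N,\mu)]\leq K_N+\min\{L_N,M_N\}$, and to treat each regime of $p$ relative to $d/2$ separately. In every case, I first massage the three quantities into a common form involving a single sum over the shells $G_n^a$, then bound this sum by Cauchy--Schwarz or Hölder using the moment assumption, and finally optimise over $a>1$ to extract the closed-form function $H$.

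For case (i), $L_N$ is the relevant choice: the geometric series $\sum_{\ell\geq 0}r^{(d/2-p)\ell}=(1-r^{d/2-p})^{-1}$ converges thanks to $p>d/2$, giving $L_N=2^p\kappa_{d,p}(\sqrt{N})^{-1}\sum_n a^{pn}\sqrt{\mu(G_n^a)}$. Using the crude bounds $[2\mu(G_n^a)]\wedge\sqrt{\mu(G_n^a)/N}\leq \sqrt{\mu(G_n^a)/N}$ and $\sqrt{1-\mu(G_0^a)}\leq \sum_{n\geq 1}\sqrt{\mu(G_n^a)}$, the $K_N$ contribution also reduces to a multiple of $(\sqrt N)^{-1}\sum_n a^{pn}\sqrt{\mu(G_n^a)}$, so that
\[
K_N+L_N\leq \frac{2^p(\e_p+\kappa_{d,p})}{\sqrt N}\sum_{n\geq 0}a^{pn}\sqrt{\mu(G_n^a)}.
\]
Cauchy--Schwarz bounds this last sum by $(1-a^{2p-q})^{-1/2}\sqrt{1+a^q\cM_q(\mu)}$, where the first factor requires $q>2p$ and the second uses $|x|^q\geq a^{(n-1)q}$ on $G_n^a$ for $n\geq 1$. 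Minimising over $a>1$ recovers exactly $H(\e_p/\kappa_{d,p},2p,q)[\cM_q(\mu)]^{p/q}$.

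For cases (ii) and (iii) I instead use $M_N$ and recognise its inner $\ell$-sum as $\mu(G_n^a)\Psi_{r,p,d/2}(C_0/\sqrt{N\mu(G_n^a)})$ with $C_0=\sqrt{A}\,r^d/(2(r^{d/2}-1))$. In case (ii), the critical bound \eqref{tt2} of Lemma~\ref{phi} with $\alpha=\beta=p$ produces the $\log_+$ factor that defines $\kappa_{d,p,N}$ and reduces $M_N$ again to a multiple of $\sum_n a^{pn}\sqrt{\mu(G_n^a)}/\sqrt N$, so the Cauchy--Schwarz step of case (i) applies verbatim. In case (iii), the subcritical bound \eqref{tt3} with $\alpha=p$, $\beta=d/2$ simplifies $M_N$ to $2^p\kappa_{d,p}N^{-p/d}\sum_n a^{pn}\mu(G_n^a)^{(d-p)/d}$ after collecting the factors $(1-r^{-p})$, $(r^{d/2-p}-1)^{-1}$ and $C_0^{2p/d}$. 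Controlling $K_N$ in the same form is achieved by the elementary interpolation $[2\mu]\wedge\sqrt{\mu/N}\leq 2^{1-2p/d}\mu^{1-p/d}N^{-p/d}$ (valid since $2p/d\leq 1$), so that $K_N+M_N$ is proportional to $N^{-p/d}\sum_n a^{pn}\mu(G_n^a)^{(d-p)/d}$ with combined constants matching the dimensionless ratio $2^{1-2p/d}\e_p/\kappa_{d,p}$. A Hölder inequality with conjugate exponents $d/(d-p)$ and $d/p$ (using $q>dp/(d-p)$) then gives
\[
\sum_{n\geq 0}a^{pn}\mu(G_n^a)^{(d-p)/d}\leq (1+a^q\cM_q(\mu))^{(d-p)/d}(1-a^{-d(q-s)/s})^{-p/d}, \qquad s=\frac{dp}{d-p}.
\]

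The main obstacle is the final closed-form minimisation over $a\in(1,\infty)$ of an expression of the generic shape $(1+a^q\cM_q(\mu))^{s/q}/(1-a^{s-q})^{s/q}$ (with $s\in\{2p,dp/(d-p)\}$): this calculus step, together with the careful bookkeeping that ensures the constants $\e_p$ and $\kappa_{d,p}$ enter only through the ratio $x$ appearing in the definition of $H$, is the crux of the argument. The critical case $p=d/2$ adds a second subtlety, namely that the $\log_+$ factor in $\kappa_{d,p,N}$ must be extracted cleanly before the Cauchy--Schwarz step, which is precisely what produces the $N$-dependence in the constant.
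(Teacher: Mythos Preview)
There is a genuine gap in the final optimisation step, and it affects all three cases. Your claim that ``minimising over $a>1$ recovers exactly $H(\e_p/\kappa_{d,p},2p,q)[\cM_q(\mu)]^{p/q}$'' is not correct. After your Cauchy--Schwarz bound in case (i) you have a quantity of the form
\[
\frac{2^p(\e_p+\kappa_{d,p})}{\sqrt N}\,(1-a^{2p-q})^{-1/2}(1+a^q\cM_q(\mu))^{1/2},
\]
and for fixed large $\cM_q(\mu)$ the infimum over $a>1$ behaves like a constant times $[\cM_q(\mu)]^{1/2}$, not like $[\cM_q(\mu)]^{p/q}$. (The same issue occurs in case (iii): your H\"older bound scales like $[\cM_q(\mu)]^{(d-p)/d}=[\cM_q(\mu)]^{p/s}$ with $s=dp/(d-p)$, again not $[\cM_q(\mu)]^{p/q}$.) The correct exponent $p/q$ is obtained in the paper by a \emph{second} optimisation that you have omitted: one applies the inequality to the dilated measure $\mu^\alpha$ (image of $\mu$ under $x\mapsto\alpha x$), uses $\E[\cT_p(\mu_N^\alpha,\mu^\alpha)]=\alpha^p\E[\cT_p(\mu_N,\mu)]$ and $\cM_q(\mu^\alpha)=\alpha^q\cM_q(\mu)$, and then minimises over $\alpha>0$. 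Only this two-parameter optimisation (first over $a$, then over the scale $\alpha$) produces the closed form $H$.

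Moreover, even with the scaling step added, your route would not yield the \emph{exact} function $H(x,s,q)$ with $x=\e_p/\kappa_{d,p}$. The dependence of $H$ on $x$ comes from the fact that the $n=0$ terms of $K_N$ and of $L_N$ (or $M_N$) are bounded asymmetrically in the paper: the $n=0$ term of $L_N$ (resp.\ $M_N$) contributes the constant $\kappa_{d,p}$ with no moment factor, because $\sqrt{\mu(G_0^a)}\le 1$, whereas the $n=0$ term of $K_N$ contributes $\e_p$ multiplied by a moment factor, because $1-\mu(G_0^a)\le\cM_q(\mu)$. When you merge $K_N$ and $L_N$ into a single common sum $(\e_p+\kappa_{d,p})\sum_{n\ge 0}a^{pn}\sqrt{\mu(G_n^a)}$ and only then apply Cauchy--Schwarz, this asymmetry is destroyed and the constants $\e_p$ and $\kappa_{d,p}$ can only enter through their sum, not through the ratio $x=\e_p/\kappa_{d,p}$ that appears in $H$. (A related minor point: your displayed inequality $K_N+L_N\le 2^p(\e_p+\kappa_{d,p})N^{-1/2}\sum_{n\ge 0}a^{pn}\sqrt{\mu(G_n^a)}$ is also not quite right, since the $n=0$ term of $K_N$ involves $\sqrt{1-\mu(G_0^a)}$, not $\sqrt{\mu(G_0^a)}$; your subadditivity trick bounds it by $\sum_{n\ge 1}\sqrt{\mu(G_n^a)}$, which doubles the $\e_p$ contribution on the $n\ge 1$ shells rather than filling in the $n=0$ slot.) In short, the paper's proof keeps the $n=0$ and $n\ge 1$ contributions separate, bounds the shell masses directly by $\mu(G_n^a)\le\cM_q(\mu)a^{-q(n-1)}$ (no Cauchy--Schwarz or H\"older over $n$), optimises over $a$, and then performs the dilation argument; both the separation and the dilation are essential to land on the stated constants.
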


\begin{proof}
We fix $q>p>0$. We have $(G_0^a)^c \subset \{x \in \rd : |x|\geq 1\}$ and
$G_n^a \subset  \{x \in \rd : |x|\geq a^{n-1}\}$ for each $n\geq 1$, whence
\begin{equation}\label{mom}
1-\mu(G_0^a) \leq \cM_q(\mu) \qquad \hbox{and}\qquad \mu(G_n^a) \leq \cM_q(\mu)a^{-q(n-1)}\quad\hbox{if $n\geq 1$}.
\end{equation}
We know from Proposition \ref{ccco} that 
$\E[\cT_p(\mu_N,\mu)] \leq K_{N}+\min\{L_{N},M_{N}\}$.
\vip
{\it Case (i): $p>d/2$ and $q>2p$.} First, by \eqref{mom}, we have
$$
K_{N}\leq 2^p\e_p\sqrt{\frac{\cM_q(\mu)}{N}}+ 2^p\e_p\sum_{n\geq 1} a^{pn}\sqrt{\frac{\cM_q(\mu)}{Na^{q(n-1)}}}
= 2^p\e_p \sqrt{\frac{\cM_q(\mu)}N}\Big[1+\frac {a^p}{1-a^{p-q/2}}\Big].
$$
Next,
\begin{align*}
L_{N}\leq& \frac{D^p\sqrt{Ar^d}}{2\sqrt N} \sum_{\ell\geq 0}r^{(d/2-p)\ell}\Big( 1 + \sum_{n\geq 1}a^{pn}
\sqrt{\frac{\cM_q(\mu)}{a^{q(n-1)}}}\Big)=2^p\frac{\kappa_{d,p}}{\sqrt N}
\Big(1+\sqrt{\cM_q(\mu)} \frac {a^p}{1-a^{p-q/2}}\Big)
\end{align*}
recall that $\kappa_{d,p}=(D/2)^p\sqrt{Ar^d}/[2(1-r^{d/2-p})]$.
All in all, we have proved that
\begin{equation}\label{fri}
\E[\cT_p(\mu_N,\mu)]\leq \frac{2^p}{\sqrt N} \Big(\kappa_{d,p}+\sqrt{\cM_q(\mu)}\Big[\e_p+(\e_p+\kappa_{d,p})
 \frac {a^p}{1-a^{p-q/2}}\Big] \Big).
\end{equation}
This holds true for any value of $a>1$ and we optimally choose
$a=[q/(2p)]^{2/(q-2p)}$ and set
$$
v_{p,q}=\frac{a^p}{1-a^{p-q/2}}=
\frac{q}{q-2p}\Big(\frac{q}{2p}\Big)^{2p/(q-2p)}.
$$
We thus have
$$
\E[\cT_p(\mu_N,\mu)]\leq \frac{2^p}{\sqrt N} \Big(\kappa_{d,p}+\sqrt{\cM_q(\mu)}
[\e_p+(\e_p+\kappa_{d,p})v_{p,q}] \Big)
= \frac{2^p \kappa_{d,p}}{\sqrt N} \Big(1+\sqrt{\cM_q(\mu)}\rho_{d,p,q}\Big),
$$
where $\rho_{d,p,q}=\e_p/\kappa_{d,p}+(\e_p+\kappa_{d,p})v_{p,q}/\kappa_{d,p}$. 
\vip
For any $\alpha>0$, we may apply this formula to $\mu^\alpha$, the image measure of $\mu$
by the map $x \mapsto \alpha x$, which satisfies $\E[\cT_p(\mu_N^\alpha,\mu^\alpha)]=\alpha^p\E[\cT_p(\mu_N,\mu)]$
and $\cM_q(\mu^\alpha)=\alpha^q \cM_q(\mu)$. We thus get
$$
\E[\cT_p(\mu_N,\mu)]\leq \frac{2^p \kappa_{d,p}}{\sqrt N} \frac{1}{\alpha^p}
\Big(1+\sqrt{\alpha^q\cM_q(\mu)}\rho_{d,p,q}\Big).
$$
We optimally choose $\alpha=[(q-2p)\rho_{d,p,q}\sqrt{\cM_q(\mu)}/(2p)]^{-2/q}$ and find
\begin{align*}
\E[\cT_p(\mu_N,\mu)]\leq& \frac{2^p \kappa_{d,p}}{\sqrt N}
[\cM_q(\mu)]^{p/q} \Big(\rho_{d,p,q} \frac{q-2p}{2p}\Big)^{2p/q} \frac{q}{q-2p}\\
=&\frac{2^p \kappa_{d,p}}{\sqrt N}[\cM_q(\mu)]^{p/q} \Big(\frac{\e_p}{\kappa_{d,p}}\frac{q-2p}{2p}
+\frac{\e_p+\kappa_{d,p}}{\kappa_{d,p}}v_{p,q}\frac{q-2p}{2p}\Big)^{2p/q} \frac{q}{q-2p}\\
=&\frac{2^p \kappa_{d,p}}{\sqrt N}[\cM_q(\mu)]^{p/q} \Big(\frac{\e_p}{\kappa_{d,p}}\frac{q-2p}{2p}
+\frac{\e_p+\kappa_{d,p}}{\kappa_{d,p}}\Big(\frac{q}{2p}\Big)^{q/(q-2p)}\Big)^{2p/q} \frac{q}{q-2p}\\
=&\frac{2^p \kappa_{d,p}}{\sqrt N}[\cM_q(\mu)]^{p/q} H\Big(\frac{\e_p}{\kappa_{d,p}},2p,q\Big).
\end{align*}

{\it Case (ii): $p=d/2$ and $q>2p$.} Exactly as in Case (i),
$$
K_{N}\leq 2^p\e_p \sqrt{\frac{\cM_q(\mu)}N}\Big[1+\frac {a^p}{1-a^{p-q/2}}\Big].
$$
We next write
\begin{align*}
M_{N}\leq&D^p(1-r^{-p})\sum_{n\geq 0}a^{pn} \mu(G^a_n)
\sum_{\ell\geq 0} r^{-p\ell} \Big(1\land\Big[\frac{\sqrt A r^{d\ell/2+d}}{2(r^{d/2}-1)}
\frac1{\sqrt{N\mu(G_n^a)}}\Big]\Big)\\
= &D^p(1-r^{-p})\sum_{n\geq 0}a^{pn} \mu(G^a_n)
\Psi_{r,p,d/2}\Big(\frac{\sqrt A r^{d}}{2(r^{d/2}-1)\sqrt{N\mu(G_n^a)}}\Big).
\end{align*}
By \eqref{tt2} with $\alpha=\beta=p=d/2$, we can
bound $M_N$ by
\begin{align*}
&D^p(1-r^{-p})\sum_{n\geq 0}a^{pn} \mu(G^a_n)\Big[
\frac{\log_+(2(r^{-d/2}-r^{-d})\sqrt{N\mu(G_n^a)/A})}{p\log r}+\frac1{1-r^{-p}}\Big]
\frac{\sqrt A r^{d}}{2(r^{d/2}-1)\sqrt{N\mu(G_n^a)}}\\
\leq & \frac{D^p\sqrt A r^{2p}}{2(r^{p}-1)\sqrt N}\Big[\frac{(1-r^{-p})\log_+(2(r^{-p}-r^{-2p})\sqrt{N/A})}{p\log r}
+1\Big]\sum_{n\geq 0}a^{pn} \sqrt{\mu(G^a_n)}
\end{align*}
since $\mu(G_n^a)\leq 1$. Observing that, by \eqref{mom},
$$
\sum_{n\geq 0}a^{pn} \sqrt{\mu(G^a_n)}\leq 1+\sum_{n\geq 1}a^{pn}\sqrt{\frac{\cM_q(\mu)}{a^{q(n-1)}}}
= 1+\sqrt{\cM_q(\mu)}
\frac{a^p}{1-a^{p-q/2}}, 
$$
and recalling that
$$
\kappa_{d,p,N}=\frac{(D/2)^p\sqrt A r^{p}\log_+(2(r^{-p}-r^{-2p})\sqrt{N/A})}{2p\log r}
+\frac{(D/2)^p\sqrt A r^{2p}}{2(r^p-1)},
$$
we conclude that
$$
M_N \leq 2^p \frac{\kappa_{d,p,N}}{\sqrt N}\Big(1+\sqrt{\cM_q(\mu)}\frac{a^p}{1-a^{p-q/2}}\Big).
$$
All in all, we have proved that
$$
\E[\cT_p(\mu_N,\mu)]\leq \frac{2^p}{\sqrt N}\Big(\kappa_{d,p,N}+\sqrt{\cM_q(\mu)}\Big[\e_p+(\e_p+\kappa_{d,p,N})
\frac{a^p}{1-a^{p-q/2}}\Big] \Big).
$$
From there we conclude exactly as in Case (i) (compare the above formula to \eqref{fri}) that
$$
\E[\cT_p(\mu_N,\mu)]\leq 2^p\frac{\kappa_{d,p,N}}{\sqrt N}[\cM_q(\mu)]^{p/q}H\Big(\frac{\e_p}{\kappa_{d,p,N}},2p,q\Big).
$$

{\it Case (iii): $p\in (0,d/2)$ and $q>dp/(d-p)$.} We write, using that $2p/d\in (0,1)$ and then \eqref{mom},
\begin{align*}
K_N \leq &2^p\e_p \Big[2(1-\mu(G_0^a))\Big]^{1-2p/d}\Big[\sqrt{\frac{1-\mu(G_0^a)}{N}}\Big]^{2p/d}+
2^p\e_p\sum_{n\geq 1}a^{pn} \Big[2\mu(G_n^a)\Big]^{1-2p/d}
\Big[\sqrt{\frac{\mu(G_n^a)}{N}}\Big]^{2p/d}\\
\leq& 2^p\e_p    \Big[2\cM_q(\mu)\Big]^{1-2p/d}\Big[\sqrt{\frac{\cM_q(\mu)}{N}}\Big]^{2p/d}
+2^p\e_p\sum_{n\geq 1}a^{pn} \Big[\frac{2\cM_q(\mu)}{a^{q(n-1)}}\Big]^{1-2p/d}
\Big[\sqrt{\frac{\cM_q(\mu)}{Na^{q(n-1)}}}\Big]^{2p/d}\\
= &2^p\e_p \frac{2^{1-2p/d}[\cM_q(\mu)]^{1-p/d}}{N^{p/d}}
+2^p\e_p \frac{2^{1-2p/d}[\cM_q(\mu)]^{1-p/d}}{N^{p/d}}
\sum_{n\geq 1}a^{pn-q(1-p/d)(n-1)}\\
= & 2^p\e_p \frac{2^{1-2p/d}[\cM_q(\mu)]^{1-p/d}}{N^{p/d}} \Big(1+\frac{a^p}{1-a^{p-q+pq/d}}\Big).
\end{align*}
We used that $p-q+pq/d<0$ because $q>dp/(d-p)$. Next,
\begin{align*}
M_{N}\leq D^p(1-r^{-p})\sum_{n\geq 0}a^{pn} \mu(G^a_n)
\Psi_{r,p,d/2}\Big(\frac{\sqrt A r^{d}}{2(r^{d/2}-1)\sqrt{N\mu(G_n^a)}}\Big)
\end{align*}
as in Case (ii). Thus, using \eqref{tt3} with $\alpha=p$ and $\beta=d/2$,
\begin{align*}
M_N\leq & D^p(1-r^{-p})\Big[\frac1{r^{d/2-p}-1}+\frac1{1-r^{-p}}\Big]
\sum_{n\geq 0}a^{pn} \mu(G^a_n) \Big(\frac{\sqrt A r^{d}}{2(r^{d/2}-1)\sqrt{N\mu(G_n^a)}}\Big)^{2p/d}\\
=& 2^p \frac{\kappa_{d,p}}{N^{p/d}}\sum_{n\geq 0}a^{pn} [\mu(G^a_n)]^{1-p/d},
\end{align*}
because
\begin{align*}
\Big(\frac D2\Big)^p(1-r^{-p})\Big[\frac1{r^{d/2-p}-1}+\frac1{1-r^{-p}}\Big]
\Big(\frac{\sqrt A r^{d}}{2(r^{d/2}-1)}\Big)^{2p/d}=\frac{D^pA^{p/d}r^p(r^{d/2}-1)^{1-2p/d}}
{2^{p+2p/d}(r^{d/2-p}-1)}
=\kappa_{d,p}.
\end{align*}
But, using \eqref{mom},
$$
\sum_{n\geq 0}a^{pn} [\mu(G^a_n)]^{1-p/d} 
\leq  1+\sum_{n\geq 1} a^{pn}\Big[\frac{\cM_q(\mu)}{a^{q(n-1)}}\Big]^{1-p/d}
= 1+ [\cM_q(\mu)]^{1-p/d}\frac{a^p}{1-a^{p-q+pq/d}},
$$
whence
$$
M_N\leq 2^p \frac{\kappa_{d,p}}{N^{p/d}}\Big(1+ [\cM_q(\mu)]^{1-p/d}\frac{a^p}{1-a^{p-q+pq/d}}\Big).
$$
All in all, we have
\begin{align*}
\E[\cT_p(\mu_N,\mu)]\leq&
\frac{2^p}{N^{p/d}}\Big(\kappa_{d,p}+[\cM_q(\mu)]^{1-p/d}\Big[2^{1-2p/d}\e_p+
(2^{1-2p/d}\e_p+\kappa_{d,p})\frac{a^p}{1-a^{p-q+pq/d}}\Big]\Big)\\
=&\frac{2^p}{N^{p/d}}\Big(\kappa_{d,p} + [\cM_q(\mu)]^{p/\tau}\Big[2^{1-2p/d}\e_p+
(2^{1-2p/d}\e_p+\kappa_{d,p}) \frac{a^p}{1-a^{p-pq/\tau}} \Big]\Big),
\end{align*}
where we have set $\tau=dp/(d-p)$.
We choose $a=[q/\tau]^{\tau/(p(q-\tau))}>1$, for which
$$
v_{d,p,q}=\frac{a^p}{1-a^{p-pq/\tau}}=\frac{q}{q-\tau}\Big(\frac{q}\tau\Big)^{\tau/(q-\tau)}.
$$
Thus
\begin{align*}
\E[\cT_p(\mu_N,\mu)]\leq &
\frac{2^p \kappa_{d,p}}{N^{p/d}}\Big(1
+[\cM_q(\mu)]^{p/\tau}\Big[\frac{2^{1-2p/d}\e_p}{\kappa_{d,p}}
+ (2^{1-2p/d}\e_p+\kappa_{d,p})\frac{v_{d,p,q}}{\kappa_{d,p}}\Big]\Big)\\
=&\frac{2^p \kappa_{d,p}}{N^{p/d}}\Big(1
+[\cM_q(\mu)]^{p/\tau}\rho_{d,p,q}\Big),
\end{align*} 
where $\rho_{d,p,q}\!=\!2^{1-2p/d}\e_p/\kappa_{d,p}+(2^{1-2p/d}\e_p+\kappa_{d,p})v_{d,p,q}/\kappa_{d,p}$.
As in Case (i), we deduce that
$$
\E[\cT_p(\mu_N,\mu)]\leq \frac{2^p\kappa_{d,p}}{N^{p/d}}\frac1{\alpha^p}\Big(1
+ [\alpha^q \cM_q(\mu)]^{p/\tau}\rho_{d,p,q}\Big)
$$
for all $\alpha>0$. 
With $\alpha=[\rho_{d,p,q}\cM_q^{p/\tau}(q-\tau)/\tau]^{-\tau/(pq)} $, which is optimal,
we find
\begin{align*}
\E[\cT_p(\mu_N,\mu)]\leq &\frac{2^p\kappa_{d,p}}{N^{p/d}}[\cM_q(\mu)]^{p/q}
\Big(\rho_{d,p,q}\frac{q-\tau}{\tau}\Big)^{\tau/q}\frac{q}{q-\tau}\\
=&\frac{2^p\kappa_{d,p}}{N^{p/d}}[\cM_q(\mu)]^{p/q}
\Big(\frac{2^{1-2p/d}\e_p}{\kappa_{d,p}}\frac{q-\tau}{\tau}
+\frac{2^{1-2p/d}\e_p+\kappa_{d,p}}{\kappa_{d,p}}v_{d,p,q}\frac{q-\tau}{\tau}\Big)^{\tau/q}\frac{q}{q-\tau}\\
=&\frac{2^p\kappa_{d,p}}{N^{p/d}}[\cM_q(\mu)]^{p/q}
\Big(\frac{2^{1-2p/d}\e_p}{\kappa_{d,p}}\frac{q-\tau}{\tau}
+ \frac{2^{1-2p/d}\e_p
+\kappa_{d,p}}{\kappa_{d,p}}\Big(\frac q\tau\Big)^{q/(q-\tau)}\Big)^{\tau/q}\frac{q}{q-\tau}\\
=&\frac{2^p\kappa_{d,p}}{N^{p/d}}[\cM_q(\mu)]^{p/q}H\Big(\frac{2^{1-2p/d}\e_p}{\kappa_{d,p}},\tau,q\Big)
\end{align*}
as desired.
\end{proof}

\section{Conclusion for the maximum norm}\label{concmax}

Here we consider the maximum norm $|\cdot|_\infty$. We claim that Setting \ref{ssm}-(c) holds
true with $A=1$, $D=2$ and $r=2$.
Indeed, consider, for each $\ell\geq 0$, the natural partition $\cQ_\ell$ of $G_0=[-1,1]^d$ into 
$2^{d\ell}$ translations of $[-2^{-\ell},2^{-\ell}]^d$ (we actually have to remove some
of the common faces, but this is of course not an issue).
Then for any $k\geq 1$, $(\cQ_\ell)_{\ell=0,\dots,k}$ is a family of nested partitions of $G_0$,
we have $|\cQ_\ell|=2^{d\ell} =Ar^{d\ell}$ for all $\ell=1,\dots,k$ and
$\delta_\ell=\max_{C \in \cQ_\ell} \sup_{x,y\in C}|x-y|= 2\times 2^{-\ell}=Dr^{-\ell}$
for all $\ell =0,\dots, k$.

\vip

We thus may apply Proposition \ref{genen} with these values $A=1$, $D=2$ and $r=2$. This gives Theorem
\ref{mr} (with the norm  $|\cdot|_\infty$) with the announced formulas, which we now check.

\vip
If first $p>d/2$, we have
$$
\kappa_{d,p}^{(\infty)}=\frac{D^p\sqrt{A r^d}}{2^{p+1}(1-r^{d/2-p})}=\frac{2^{d/2-1}}{1-2^{d/2-p}}.
$$

If next $p=d/2$, we have
\begin{align*}
\kappa_{d,p,N}^{(\infty)}=&\frac{D^p\sqrt{A} r^p}{2^{p+1}p \log r}
\log_+\Big(2(r^{-p}-r^{-2p})\sqrt{\frac N A}\Big) 
+\frac{D^p\sqrt{A} r^{2p}}{2^{p+1}(r^p-1)}\\
=&\frac{2^{p-1}}{p\log 2} \log_+\Big((2^{1-p}-2^{1-2p})\sqrt N \Big)+ \frac{2^{p-1}}{1-2^{-p}}.
\end{align*}

If finally $p\in (0,d/2)$, we have
$$
\kappa_{d,p}^{(\infty)}=
\frac{D^pA^{p/d}r^p(r^{d/2}-1)^{1-2p/d}}{2^{p+2p/d}(r^{d/2-p}-1)}
=\frac{2^{p-2p/d}(1-2^{-d/2})^{1-2p/d}}{1-2^{p-d/2}}.
$$

\section{Conclusion for the \blu other norms\bla}\label{conceuc}

\blu
We now work with $\mm \in [1,\infty)$.
The following lemma follows from Le Gouic \cite[Lemma 3.18]{lg}
Recall that $K_d^{(\mm)}$ was defined in \eqref{Kd} and that 
$B_\mm(x,r)=\{y\in\rd : |y-x|_\mm<r\}$.

\begin{lem}
For any $k\geq 1$, any $r\geq 2$, there exists a family $(\cQ_{k,\ell})_{\ell=0,\dots,k}$
of nested partitions of $B_\mm(0,1)$ such that $\cQ_{k,0}=\{B_\mm(0,1)\}$,
with $|\cQ_{k,\ell}|\leq K_d^{(\mm)} 2^{-d}r^{d\ell}$ for all $\ell=1,\dots,k$ and
$\delta_{k,\ell}=\max_{C \in \cQ_{k,\ell}} \sup_{x,y \in C} |x-y|_\mm \leq (4r/(r-1)) r^{-\ell}$ for all $\ell=0,\dots,k$.
\end{lem}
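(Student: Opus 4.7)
The plan is to construct nested partitions via a hierarchical covering of $B_2(0,1)$ by Euclidean balls whose radii decrease geometrically, controlled so that both the cardinality and diameter bounds fall out simultaneously.

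First, I fix the covering radii $\rho_\ell = (r-2)\,r^{-\ell-1}$ for $\ell = 1,\dots,k$. Since $r>2$ and $\ell\ge 1$, these satisfy $\rho_\ell\in(0,1)$, so by the definition \eqref{Nr} and the bound \eqref{Kd}, there exist $N_\ell \le K_d\rho_\ell^{-d} = K_d(r-2)^{-d}r^d\, r^{d\ell}$ points $y_1^{(\ell)},\dots,y_{N_\ell}^{(\ell)}\in B_2(0,1)$ with $B_2(0,1)\subset \bigcup_i B_2(y_i^{(\ell)},\rho_\ell)$. This already gives the target cardinality bound.

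Second, I assemble the partitions from the finest level up, using a labelling. At level $k$, set $i_k(x) = \min\{i : x\in B_2(y_i^{(k)},\rho_k)\}$ for $x\in B_2(0,1)$, which is well-defined because the level-$k$ balls cover $B_2(0,1)$. For $\ell = k-1,\dots,1$, define recursively $i_\ell(x) = f_{\ell+1}(i_{\ell+1}(x))$, where $f_{\ell+1}(i) = \min\{j : y_i^{(\ell+1)}\in B_2(y_j^{(\ell)},\rho_\ell)\}$, which is again well-defined because the level-$\ell$ balls cover the centres $y_i^{(\ell+1)}\in B_2(0,1)$. Let $\cQ_{k,\ell}$ be the collection of non-empty fibres $\{x : i_\ell(x)=j\}$, $j=1,\dots,N_\ell$, and set $\cQ_{k,0}=\{B_2(0,1)\}$. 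Since $i_{\ell-1}$ is a function of $i_\ell$ alone, $\cQ_{k,\ell}$ refines $\cQ_{k,\ell-1}$, so the family is nested; the bound $|\cQ_{k,\ell}|\le N_\ell$ is immediate.

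Third, I verify the diameter estimate. If $i_\ell(x)=j$, unfolding the recursion yields $|x-y_{i_k(x)}^{(k)}|_2<\rho_k$ and $|y_{i_{m+1}(x)}^{(m+1)} - y_{i_m(x)}^{(m)}|_2 < \rho_m$ for $m=\ell,\dots,k-1$, whence by the triangle inequality $|x-y_j^{(\ell)}|_2 \le \sum_{m=\ell}^{k}\rho_m$. Summing the geometric series,
$$
\sum_{m=\ell}^{k}\rho_m \le (r-2)r^{-\ell-1}\,\frac{r}{r-1} = \frac{r-2}{r-1}\,r^{-\ell} < r^{-\ell},
$$
so any two points in the same level-$\ell$ cell lie within $2r^{-\ell}$ of each other; at level $0$ the diameter is trivially $2=2r^0$.

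The key technical point is the interplay between the two constraints: the cardinality bound wants $\rho_\ell$ not too small, while telescoping the nesting errors forces $\sum_{m\ge \ell}\rho_m\le r^{-\ell}$. The choice $\rho_\ell=(r-2)r^{-\ell-1}$ hits the cardinality target exactly while leaving just enough slack $(r-2)/(r-1)<1$ for the geometric sum; this is precisely where the assumption $r>2$ (as opposed to $r>1$) is used.
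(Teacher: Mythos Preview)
Your proof is correct and follows essentially the same strategy as the paper: both use hierarchical coverings of $B_2(0,1)$ by balls of radius $(r-2)r^{-\ell-1}$ (the paper writes this as $\gamma r^{-\ell}$ with $\gamma=(r-2)/r$), then aggregate them into nested partitions from the finest level up.

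The one noteworthy difference is the aggregation rule. The paper assigns each level-$(\ell+1)$ cell to the first level-$\ell$ cover ball it \emph{intersects}, which yields the recursion $\delta_{k,\ell}\le 2\gamma r^{-\ell}+2\delta_{k,\ell+1}$ and hence the bound $\delta_{k,\ell}\le 2\gamma r^{-\ell}\sum_{i\ge 0}(2/r)^i=2r^{-\ell}$. You instead assign each level-$(\ell+1)$ \emph{centre} to the first level-$\ell$ ball containing it, so the distance from any point to its level-$\ell$ centre telescopes directly as $\sum_{m\ge\ell}\rho_m\le \frac{r-2}{r-1}r^{-\ell}$. Your variant is slightly cleaner (no factor $2$ in the recursion, tighter intermediate constant $\tfrac{r-2}{r-1}<1$), but both reach the same final bounds $|\cQ_{k,\ell}|\le K_d(r-2)^{-d}r^{d}r^{d\ell}$ and $\delta_{k,\ell}\le 2r^{-\ell}$.
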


It suffices to use \cite[Lemma 3.18]{lg} with $E=B_\mm(0,1)$, $d(x,y)=|x-y|_\mm$, 
$D=2$, $\e=r^{-1}$ and to note that for 
$\ell=1,\dots,k$, since $|\cQ_{k,\ell}|\leq N_{2r^{-\ell}}^{(\mm)}$ and $2r^{-\ell}\in (0,1]$, we have 
$|\cQ_{k,\ell}|\leq K_d^{(\mm)} 2^{-d}r^{d\ell}$.

\vip

Thus Setting \ref{ssm}-(c) holds with any $r\geq 2$, with $A=K_d^{(\mm)} 2^{-d}$ and $D=4r/(r-1)$,
so that we may apply Proposition \ref{genen} with these values. Optimizing in $r\geq 2$,
this gives Theorem
\ref{mr} (with the norm  $|\cdot|_\mm$) with the announced formulas, which we now check.

\vip
If first $p>d/2$, we find $\kappa_{d,p}^{(\mm)}=\min\{\kappa_{d,p,r}^{(\mm)} : r\geq 2\}$, where
$$
\kappa_{d,p,r}^{(\mm)}=\frac{D^p\sqrt{A r^d}}{2^{p+1}(1-r^{d/2-p})}
=\sqrt{K_d^{(\mm)}}\frac{2^{p-1-d/2}r^{p+d/2}}{(r-1)^p(1-r^{d/2-p})}.
$$

If next $p=d/2$, we have $\kappa_{d,p,N}^{(\mm)}=\min\{\kappa_{d,p,N,r}^{(\mm)} : r\geq 2\}$, where
\begin{align*}
\kappa_{d,p,N,r}^{(\mm)}=&\frac{D^p\sqrt{A} r^{p}}{2^{p+1}p\log r}
\log_+\Big(2(r^{-p}-r^{-2p})\sqrt{\frac N A}\Big) 
+\frac{D^p\sqrt{A} r^{2p}}{2^{p+1}(r^p-1)}\\
=& \frac{\sqrt{K_d^{(\mm)}}}2 \frac{r^{2p}}{(r-1)^p p\log r} 
\log_+\Big(2^{p+1}(r^{-p}-r^{-2p})\sqrt{\frac{N}{K_d^{(\mm)}}} \Big)
+ \frac{\sqrt{K_d^{(\mm)}}}2 \frac{r^{3p}}{(r-1)^p (r^p-1)}.
\end{align*}

If finally $p\in (0,d/2)$, we have $\kappa_{d,p}^{(\mm)}=\min\{\kappa_{d,p,r}^{(\mm)} : r\geq 2\}$, where
$$
\kappa_{d,p,r}^{(\mm)}=
\frac{D^pA^{p/d}r^p(r^{d/2}-1)^{1-2p/d}}{2^{p+2p/d}(r^{d/2-p}-1)}
= \Big(\frac{K_d^{(\mm)}}{4}\Big)^{p/d} \frac{r^{2p} (1-r^{-d/2})^{1-2p/d}}{(r-1)^p(1-r^{p-d/2})}.
$$

\bla

\section{The case of a low order finite moment}\label{lom}
We finally handle the case where $\mu$ has a low order moment.
We only treat the case of the maximum norm for simplicity.
We thus may apply Proposition \ref{ccco} with
$A=1$, $D=2$ and $r=2$, see the beginning of Section \ref{concmax}.
\vip

\begin{proof}[Proof of Theorem \ref{mrn}]
We consider $p>0$, $q\in (p,\min\{2p,dp/(d-p)\})$, $\mu\in \cP(\rd)$ and the associated empirical
measure $\mu_N$.
We know that $\E[\cT_p^{(\infty)}(\mu_N,\mu)] \leq K_N+\min\{L_{N},M_N\}$
by Proposition \ref{ccco}, and we have 
\begin{equation}\label{mom2}
\mu(G_n^a) \leq \cM_q^{(\infty)}(\mu)a^{-q(n-1)}\quad\hbox{if $n\geq 1$}.
\end{equation}
as usual. First,
\begin{align*}
K_N\leq & \frac{2^p\e_p}{\sqrt N} + 2^p\e_p\sum_{n\geq 1} a^{pn}\Big[\frac{2\cM_q^{(\infty)}(\mu)}{a^{q(n-1)}}
\land \sqrt{\frac{\cM_q^{(\infty)}(\mu)}{N a^{q(n-1)}}} \Big]\\
=& \frac{2^p\e_p}{\sqrt N}+ 2^p\e_pa^p  \sum_{n\geq 0} a^{(p-q)n}\Big[\Big(2\cM_q^{(\infty)}(\mu)\Big)
\land \Big(\sqrt{\frac{\cM_q^{(\infty)}(\mu)}{N}} a^{qn/2}\Big) \Big]\\
=& \frac{2^p\e_p}{\sqrt N}+2^{p+1}\e_p\cM_q^{(\infty)}(\mu) a^p   
\Psi_{a,q-p,q/2}\Big(\frac 1{2\sqrt{N\cM_q^{(\infty)}(\mu)}}\Big).
\end{align*}
Since $q/2>q-p$ because $q<2p$, we may apply \eqref{tt3} with $r=a$, with $\alpha=q-p$ and $\beta=q/2$:
\begin{align*}
K_N\leq & \frac{2^p\e_p}{\sqrt N}+ 2^{p+1}\e_p\cM_q^{(\infty)}(\mu) a^p\Big[\frac1{a^{p-q/2}-1}+\frac1{1-a^{p-q}}\Big]
\Big(\frac 1{2\sqrt{N\cM_q^{(\infty)}(\mu)}}\Big)^{2(q-p)/q}\\
=&\frac{2^p\e_p}{\sqrt N} + 2^p\e_p \rho_a [\cM_q^{(\infty)}(\mu)]^{p/q}\frac{2^{2p/q-1}}{N^{(q-p)/q}},
\end{align*}
where $\rho_a=a^p[1/(a^{p-q/2}-1)+1/(1-a^{p-q})]$. Next, recalling that $A=1$, $D=2$ and $r=2$,
\begin{align*}
M_N\leq& 2^p(1-2^{-p})\sum_{n\geq 0}a^{pn}\sum_{\ell\geq 0}2^{-p\ell}  \Big[\mu(G_n^a)\land 
\Big(\frac{2^{d-1}2^{d\ell/2}}{2^{d/2}-1}\sqrt{\frac{\mu(G_n^a)}{N}} \Big)\Big]\leq M_{N,1}+M_{N,2},
\end{align*}
where we separate the cases $n=0$ and $n\geq 1$, i.e.
$$
M_{N,1}=2^p(1-2^{-p})\sum_{\ell\geq 0}2^{-p\ell}  \Big[1\land
\Big(\frac{2^{d-1}2^{d\ell/2}}{2^{d/2}-1}\sqrt{\frac{1}{N}} \Big)\Big]
\leq 2^p(1-2^{-p})\sum_{\ell\geq 0}2^{-p\ell} = 2^p,
$$
and
\begin{align*}
M_{N,2}=& 2^p(1-2^{-p})\sum_{n\geq 1}a^{pn}\sum_{\ell\geq 0}2^{-p\ell}  \Big[\frac{\cM_q^{(\infty)}(\mu)}{a^{q(n-1)}}\land 
\Big(\frac{2^{d-1}2^{d\ell/2}}{2^{d/2}-1}\sqrt{\frac{\cM_q^{(\infty)}(\mu)}{Na^{q(n-1)}}} \Big)\Big]\\
=&2^p(1-2^{-p})a^p\sum_{\ell\geq 0}  2^{-p\ell} \sum_{n\geq 0}a^{pn} \Big[\frac{\cM_q^{(\infty)}(\mu)}{a^{qn}}\land 
\Big(\frac{2^{d-1}2^{d \ell/2}}{2^{d/2}-1}\sqrt{\frac{\cM_q^{(\infty)}(\mu)}{Na^{qn}}}  \Big)\Big]\\
=&2^p(1-2^{-p})a^p\sum_{\ell\geq 0}  2^{-p\ell} \sum_{n\geq 0}a^{(p-q)n} \Big[\cM_q^{(\infty)}(\mu)\land 
\Big(\frac{2^{d-1}2^{d \ell/2}}{2^{d/2}-1}\sqrt{\frac{\cM_q^{(\infty)}(\mu)}{N}}a^{qn/2}  \Big)\Big]\\
=&2^p(1-2^{-p})a^p\cM_q^{(\infty)}(\mu)\sum_{\ell\geq 0}  2^{-p\ell}\Psi_{a,q-p,q/2}
\Big(\frac{2^{d-1}2^{d\ell/2}}{(2^{d/2}-1)\sqrt{N\cM_q^{(\infty)}(\mu)}}\Big).
\end{align*}
By \eqref{tt3} with $r=a$, $\alpha=q-p$ and $\beta=q/2$, recalling that
$\rho_a=a^p[1/(a^{p-q/2}-1)+1/(1-a^{p-q})]$,
\begin{align*}
M_{N,2}\leq & 2^p(1-2^{-p})\cM_q^{(\infty)}(\mu)\rho_a \sum_{\ell\geq 0} 2^{-p\ell} 
\Big(\frac{2^{d-1}2^{d\ell/2}}{(2^{d/2}-1)\sqrt{N\cM_q^{(\infty)}(\mu)}} \Big)^{2(q-p)/q}\\
=& 2^p\rho_a\frac{[\cM_q^{(\infty)}(\mu)]^{p/q}}{N^{(q-p)/q}} 
\Big(\frac{2^{d-1}}{2^{d/2}-1}\Big)^{2(q-p)/q}\frac{1-2^{-p}}{1-2^{d-p-dp/q}},
\end{align*}
observe that $d-p-dp/q<0$ because $q<dp/(d-p)$. All in all, we conclude that
$$
\E[\cT_p^{(\infty)}(\mu_N,\mu)] \leq 2^p\Big(\frac{\e_p}{{\sqrt N}}+1+
\rho_a\frac{[\cM_q^{(\infty)}(\mu)]^{p/q}}{N^{(q-p)/q}}
\Big[\e_p 2^{2p/q-1} +\Big(\frac{2^{d-1}}{2^{d/2}-1}\Big)^{2(q-p)/q}\frac{1-2^{-p}}{1-2^{d-p-dp/q}}\Big] \Big).
$$
For any $\alpha>0$, we may apply this above formula to $\mu^\alpha$, the image measure of $\mu$
by the map $x \mapsto \alpha x$, for which
$\E[\cT_p^{(\infty)}(\mu_N^\alpha,\mu^\alpha)]=\alpha^p\E[\cT_p^{(\infty)}(\mu_N,\mu)]$
and $\cM_q^{(\infty)}(\mu^\alpha)=\alpha^q \cM_q^{(\infty)}(\mu)$. We get
$$
\E[\cT_p^{(\infty)}(\mu_N,\mu)]\! \leq \frac{2^p}{\alpha^p}\Big(\frac{\e_p}{{\sqrt N}}+1+
\rho_a\frac{[\alpha^q \cM_q^{(\infty)}(\mu)]^{p/q}}{N^{(q-p)/q}}
\Big[\e_p 2^{2p/q-1} \!+\Big(\frac{2^{d-1}}{2^{d/2}-1}\Big)^{2(q-p)/q}\hskip-4pt\frac{1-2^{-p}}{1-2^{d-p-dp/q}}\Big] \Big).
$$
Letting $\alpha \to \infty$, we find
$$
\E[\cT_p^{(\infty)}(\mu_N,\mu)]\leq 2^p \frac{[\cM_q^{(\infty)}(\mu)]^{p/q}}{N^{(q-p)/q}}
\rho_a
\Big[\e_p 2^{2p/q-1} +\Big(\frac{2^{d-1}}{2^{d/2}-1}\Big)^{2(q-p)/q}\frac{1-2^{-p}}{1-2^{d-p-dp/q}}\Big] \Big).
$$
Since $\rho_a=a^p/(a^{p-q/2}-1)+a^p/(1-a^{p-q})$ 
and since this result holds for any $a \in (1,\infty)$, the proof is complete.
\end{proof}

\end{document}